\newtheorem{thm}{Theorem}[section]
\newtheorem{lem}[thm]{Lemma}
\newtheorem{prop}[thm]{Proposition}
\theoremstyle{definition}
\newtheorem{defn}[thm]{Definition}
\theoremstyle{remark}
\newtheorem{remk}[thm]{Remark}
\numberwithin{equation}{section}
\newcommand{\Zcal}{ {\mathcal Z}}
\newcommand{\Rbb}{ {\mathbb R}}
\newcommand{\Zbb}{ {\mathbb Z}}
\newcommand{\crit}{{\rm crit}}
\newcommand{\supp}{{\rm supp}}
\begin{document}

\title{First mixed Laplace eigenfunctions with no hot spots}


\author{Lawford Hatcher}
\address{Department of Mathematics, Indiana University, Bloomington, IN, 47401, USA}

\subjclass[2020]{Primary }


\begin{abstract}
    The hot spots conjecture of J. Rauch states that the second Neumann eigenfunction of the Laplace operator on a bounded Lipschitz domain in $\Rbb^n$ attains its extrema only on the boundary of the domain. We present an analogous problem for domains with mixed Dirichlet-Neumann boundary conditions. We then solve this problem for Euclidean triangles and a class of planar domains bounded by the graphs of certain piecewise smooth functions. 
\end{abstract}

\maketitle

\section{Introduction}\label{intro}
Let $\Omega\subseteq \Rbb^2$ be a planar domain with piecewise smooth (i.e. $C^{\infty}$ away from at most finitely many points) boundary. Let $D,N\subseteq \partial \Omega$ denote non-empty relatively open subsets of the boundary such that $D\cap N=\emptyset$, $\overline{D\cup N}=\partial\Omega$, and $N$ is piecewise linear. We will study eigenfunctions corresponding to the lowest eigenvalue $\lambda_1$ of the following mixed Dirichlet-Neumann eigenvalue problem: 
\begin{equation}\label{efcneqn}
   \begin{cases}
    -\Delta u=\lambda u\;\;&\text{in}\;\;\Omega\\
    u=0\;\;&\text{in}\;\;D\\
    \partial_{\nu}u=0\;\;&\text{in}\;\;N,
    \end{cases} 
\end{equation}
where $\partial_{\nu}$ denotes the outward pointing normal derivative. Throughout the paper, we will let $u$ denote a first mixed eigenfunction for $(\Omega,D,N)$. By, for example, Courant's nodal domain theorem, $u$ does not change sign in $\Omega$, and it follows that $\lambda_1$ is a simple eigenvalue. Thus, $u$ is unique up to a scalar multiple, and we will assume throughout the paper that $u>0$ in $\Omega$.\\
\indent We will be particularly interested in the set of critical points of first mixed eigenfunctions. At smooth points of $\partial\Omega$ that are not endpoints of $\overline{D}$, $u$ extends to be infinitely differentiable, so we consider smooth boundary points to be potential critical points. However, we do not consider non-smooth boundary points to be critical points even when they are local extrema of $u$. We also do not consider endpoints of $\overline{D}$ to be critical points of $u$. Our main result describes the set of critical points of first mixed eigenfunctions on Euclidean triangles.

\begin{thm}\footnote{It was recently announced in \cite{yao1} that Li and Yao have a forthcoming proof that if $P$ is a triangle with $D$ an edge of $P$ such that the Neumann vertex is non-obtuse, then $u$ has no critical points.}
\label{mainthm}
    Let $P\subseteq \Rbb^2$ be a triangle, and let $D$ be either an edge or the union of two edges of $P$. Then each first mixed eigenfunction for $(P,D,N)$ has at most one critical point, and it is contained in $N$. Moreover, there exists a constant vector field $L$ such that $Lu>0$ in $P$.\footnote{We take the differential geometric viewpoint that vector fields act as first-order differential operators. That is, at each point $p\in P$, the vector field $L$ takes the partial derivative of $u$ in the direction $L(p)$.}
\end{thm}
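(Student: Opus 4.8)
The plan is to reduce the claim about critical points to the existence of a constant vector field $L$ with $Lu>0$ in $P$, and then to produce such an $L$ by a deformation/continuity argument over the space of triangles together with the structure of $u$ near the boundary. Note first that if $Lu>0$ throughout $P$ for a constant vector field $L$, then $u$ has no interior critical points, and any boundary critical point must lie on an edge where $L$ is tangent. On a Dirichlet edge $u$ vanishes, so by Hopf's lemma $\partial_\nu u\neq 0$ there, which combined with $Lu>0$ forces $L$ to be non-tangent to $D$; hence boundary critical points can only occur on $N$. On a Neumann edge $e\subseteq N$, the tangential derivative of $u$ along $e$ is a one-dimensional eigenfunction-type quantity (it satisfies an ODE after separating variables near the straight edge, or one uses that $\partial_\tau u$ is harmonic-like and changes sign at most...), so $\partial_\tau u$ vanishes at most once on $e$; and if $L$ is non-tangent to the other edges this pins the critical set down to at most one point on $N$. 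So the whole theorem follows once we establish the existence of $L$.

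To build $L$, I would first treat the case $D$ a single edge. Place $P$ with $D$ on the $x$-axis and argue that $L=\partial_x$ (or a rotation thereof) works. The key is a maximum-principle argument for the derivative $v=\partial_x u$ (or $v=Lu$): $v$ solves $-\Delta v=\lambda_1 v$ in $P$, so if $v$ changed sign it would have a nodal domain $\Omega'\subsetneq P$ on which $\lambda_1(P)$ is the first Dirichlet-mixed eigenvalue of $\Omega'$, contradicting strict domain monotonicity of the first eigenvalue unless the nodal line touches $\partial P$ in a controlled way. The boundary behavior of $v$ must be analyzed edge by edge: on $D$, $v=\partial_\tau u=0$ if $L$ is tangent to $D$, which is the wrong choice, so instead one takes $L$ tangent to an appropriate edge and uses the mixed boundary conditions to get that $v$ satisfies a favorable (Dirichlet on some edges, Neumann on others) boundary condition making the nodal-domain/eigenvalue comparison go through. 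The cleanest route is a continuity argument: the set of triangles $(P,D,N)$ for which some constant $L$ satisfies $Lu>0$ is open (by stability of the strict inequality and smooth dependence of $u$ on the domain) and closed (a limiting $L$ gives $Lu\geq 0$, and then the strong maximum principle / Hopf lemma upgrades this to $Lu>0$ unless $Lu\equiv 0$, which is impossible since $u$ is non-constant); since every triangle is connected to, say, a right triangle or an equilateral reference configuration through the space of triangles, and the claim can be checked there by explicit separation of variables, it holds for all triangles.

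For the case $D$ the union of two edges, I would either reduce to the one-edge case by a reflection/doubling trick across the Neumann edge $N$ (the double is a triangle or quadrilateral with $D$ a connected arc, to which the previous analysis or a variant applies) or run the same openness–closedness continuity argument directly, using as the base case an isoceles triangle where $N$ is the edge of symmetry: there $u$ is symmetric, the reference vector field $L$ can be taken perpendicular to $N$, and monotonicity of $u$ along lines perpendicular to $N$ follows from a reflection argument á la Gidas–Ni–Nirenberg or from the fact that $\partial u/\partial(\text{direction} \perp N)$ has the right sign by a Neumann-Neumann maximum principle after folding. The main obstacle I anticipate is the boundary regularity and corner behavior: near the non-smooth points (especially where $D$ meets $N$, where $u$ has a singular expansion $r^{\alpha}$ with non-integer $\alpha$) one must show the nodal line of $v=Lu$ cannot hide in the corner and that the eigenvalue-comparison argument is not spoiled by the singularity. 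Handling these corners — choosing $L$'s direction so that $Lu$ has a definite sign in each corner's leading asymptotic term, and verifying the continuity argument survives the corner degenerations as a triangle flattens — is where the real work lies.
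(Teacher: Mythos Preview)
Your proposal has two genuine gaps that prevent it from going through as written.

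First, the claim in your opening paragraph that on a Neumann edge $e$ the tangential derivative $\partial_\tau u$ ``satisfies an ODE after separating variables near the straight edge'' and therefore vanishes at most once is incorrect. The restriction of $u$ to a straight edge does not satisfy any closed ODE; the equation $-\Delta u=\lambda_1 u$ relates $\partial_\tau^2 u$ on $e$ to $\partial_\nu^2 u$, which is not determined by the boundary data. Controlling the number of zeros of $\partial_\tau u$ on a Neumann edge is in fact one of the hardest parts of the problem, and the paper devotes substantial effort to it via nodal-set analysis of $Xu$ in the interior (Lemmas~\ref{neumannloop}, \ref{deg1Neumann}--\ref{deg1Mixed}, the integral formula of Lemma~\ref{intFormula}, and the index machinery of Section~\ref{critedge}). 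You cannot simply assert it.

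Second, the openness half of your continuity argument is not justified. The inequality $Lu>0$ holds only in the open triangle; $Lu$ vanishes on the edges to which $L$ is tangent (or normal, in the Neumann case), and in particular at the corners. When you perturb the triangle, the edges rotate, the direction $L$ that made $Lu$ vanish on an edge no longer does, and there is no soft reason the boundary zeros cannot migrate into the interior. Handling this requires precisely the corner expansions and nodal-arc tracking that you defer to ``where the real work lies''; without them the open--closed argument does not close. Note also that the paper's choice of $L$ is discontinuous across the family of triangles (normal to $D$ when the Neumann vertex is acute, normal to the longer Neumann edge when it is obtuse, the Dirichlet-angle bisector in another case), which is a warning sign that a continuous selection of $L$ may not exist.

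For comparison, the paper does not use deformation at all. It splits into four geometric cases (Corollary~\ref{acutetwosided}, Propositions~\ref{obtusetwosided}, \ref{acuteNeumann}, \ref{obtuseNeumann}) and in each case analyzes the nodal set $\Zcal(Xu)$ directly: the vertex expansions of Section~\ref{vertex} pin down which vertices can be endpoints of nodal arcs, the variational Lemmas~\ref{noloops} and~\ref{neumannloop} forbid loops and arcs with both ends on a single Neumann edge, and the integral formula of Lemma~\ref{intFormula} rules out the remaining bad configurations. The obtuse-Neumann-vertex case (Proposition~\ref{obtuseNeumann}) is the most delicate and uses, in addition, a symmetrization on a kite and the index classification of Section~\ref{critedge}; your sketch does not touch this case in any substantive way.
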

\indent Theorem \ref{mainthm} is the combination of several results in Section \ref{triangletheorem}. The statements of these constituent results contain more precise information on the critical sets depending on the geometry of $P$ and whether $D$ equals one or two edges.\\
\indent Critical sets of eigenfunctions of the Laplace operator have been an active area of research for many years. In particular, J. Rauch conjectured in 1974 (see \cite{rauch}) that the first non-constant Neumann eigenfunction of a planar domain has local extrema only on the boundary. This is known as the \textit{hot spots conjecture}. Though the hot spots conjecture is still open in full generality, there are several partial results (see, e.g., \cite{burdzywerner}, \cite{banuelosburdzy}, \cite{miyamoto}, \cite{siudeja}, \cite{judgemondal}, \cite{erratum}, 
 and \cite{yao1}).\\
\indent Theorem \ref{mainthm} solves the analogous problem for the mixed Dirichlet-Neumann boundary conditions presented above. In this case, the first non-constant eigenfunction is the first mixed eigenfunction, making variational methods more powerful than in the case of the first non-constant Neumann eigenfunction (compare Lemma \ref{negativeintegral} below with Lemma 2.12 \cite{erratum}). However, the corresponding conjecture does not always hold in this setting, even in the case of simply connected planar domains (see Remark \ref{twosidedrectangle} for an explicit example). In fact, for any piecewise smooth $\Omega$, if $D$ is sufficiently large, then we expect that the geometry of the first mixed eigenfunction is similar to the geometry of the first Dirichlet eigenfunction, which necessarily has an interior extremum. When each first mixed eigenfunction for a triple $(\Omega,D,N)$ has no interior local extrema, we will say that $(\Omega,D,N)$ \textit{has no hot spots}.\\
\indent Though we are unable to determine all triples $(\Omega,D,N)$ having no hot spots even for polygonal domains, we prove in Theorem \ref{finitecritset} that, under suitable hypotheses, there are at most finitely many hot spots. We introduce a bit of terminology before stating the result. \\
\indent Let $\crit(u)$ denote the set of interior critical points of $u$. As stated above, $u$ extends to be infinitely differentiable at smooth points of $\partial\Omega$ that are not endpoints of $\overline{D}$. We will let $\overline{\crit}(u)$ be the set of critical points of this extension. We emphasize that non-smooth points of $\partial\Omega$ and endpoints of $\overline{D}$ are never considered to be elements of $\overline{\crit}(u)$. We will refer to $\overline{\crit}(u)$ as the \textit{critical set} of $u$. 
\begin{thm}\label{finitecritset}
    If $P$ is a simply connected polygon and $D$ is connected, then $\crit(u)$ is finite. If, in addition, $(P,D,N)$ does not consist of a rectangle with $D$ equal to a single edge, then $\overline{\crit}(u)$ is finite. 
\end{thm}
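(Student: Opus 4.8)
The plan is to show that $\crit(u)$, and under the extra hypothesis $\overline{\crit}(u)$, cannot have an accumulation point. Since both sets are closed subsets of the compact set $\overline{P}$ (for $\overline{\crit}(u)$ we use that $u$ extends smoothly past the relevant boundary pieces), it suffices to rule out accumulation points. The main tool is unique continuation together with the local structure theory of eigenfunction nodal/critical sets: near any interior point $p$, after subtracting the value $u(p)$, the function $u-u(p)$ has a leading homogeneous harmonic polynomial of some degree $k\ge 1$ in suitable coordinates, and $p\in\crit(u)$ precisely when $k\ge 2$; moreover the critical points of $u$ near $p$ then form a set contained in finitely many $C^1$ arcs emanating from $p$, so an interior accumulation point of $\crit(u)$ would force $Du\equiv 0$ on a whole arc, hence $u$ constant by unique continuation, contradicting $-\Delta u=\lambda u$ with $\lambda=\lambda_1>0$. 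The same local-structure statement must be established at smooth boundary points via the standard reflection trick: across a Neumann edge $u$ reflects to a solution of the same equation, and across a Dirichlet edge $u$ reflects oddly, so in both cases $\overline{\crit}(u)$ near such a point is again contained in finitely many arcs, and an accumulation point there again forces $u$ (or its reflection) to vanish to infinite order, hence to be identically zero.

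The first step is therefore to record the reflection/extension: $u$ solves $-\Delta u=\lambda_1 u$ on a slightly larger domain obtained by reflecting across each open Neumann segment and each relatively open Dirichlet segment, and is real-analytic there. The second step is the Bers-type asymptotic expansion at every point of $\overline{P}\setminus(\{\text{vertices of }P\}\cup\{\text{endpoints of }\overline D\})$: the vanishing order of $u-u(p)$ is finite, equal to some $k=k(p)\ge 1$, with a nonzero homogeneous harmonic leading term. The third step is the structural consequence: the zero set of the gradient of a nonconstant solution is locally contained in a finite union of real-analytic arcs (indeed near $p$ it is cut out by the vanishing of the gradient of the leading harmonic polynomial plus higher-order corrections), so it has no interior accumulation points unless the solution is locally — hence globally, by analyticity and connectedness — constant. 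Step four assembles these: since $\overline{P}$ is compact and the excluded set (vertices, endpoints of $\overline D$) is finite, a hypothetical infinite $\crit(u)$ would have an accumulation point, which cannot lie at an excluded point only if we argue it away — so we must in addition show the excluded points are not accumulation points of $\crit(u)$. For vertices of $P$ and endpoints of $\overline D$, the eigenfunction has a known asymptotic expansion in polar coordinates $r^{\alpha}\sin(\alpha\theta+\beta)$ (or a logarithmic term in exceptional angles), whose gradient vanishes on a locally finite set near the vertex; this rules out accumulation there as well, completing the proof that $\crit(u)$ is finite.

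For the second assertion one repeats the argument for the smooth extension of $u$ across $N$ and across the smooth part of $D$, so the only possible accumulation points of $\overline{\crit}(u)$ are again the finitely many vertices and endpoints of $\overline D$, each handled by the polar expansion. The one genuine exception is a rectangle with $D$ a single edge: there the reflection across the Dirichlet edge followed by reflections across the two adjacent Neumann edges produces a Neumann eigenfunction on a larger rectangle whose first mixed eigenfunction is (up to the reflection) of the separated form $\sin(\pi x/a)$ times a constant in $y$, so its gradient vanishes on an entire edge — this is exactly why that case is excluded, and it shows the hypothesis is sharp.

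I expect the main obstacle to be the boundary analysis at the non-smooth points — the vertices of $P$ and the endpoints of $\overline D$ — where one must combine the separation-of-variables asymptotics for mixed boundary data in a wedge with the interior Bers expansion, and carefully check that the exceptional wedge angles (where logarithmic terms appear, e.g.\ the Neumann–Neumann right angle giving the offending rectangle) are precisely the ones excluded by the hypotheses. The interior argument and the reflection across smooth edges are standard; packaging the vertex asymptotics uniformly enough to conclude ``no accumulation'' is where the real work lies.
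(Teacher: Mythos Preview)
Your central step does not go through. You write that an interior accumulation point of $\crit(u)$ would force $\nabla u\equiv 0$ on a whole arc, ``hence $u$ constant by unique continuation.'' But vanishing of $\nabla u$ along a one-dimensional arc is \emph{not} a unique continuation hypothesis: neither $u$ nor any derivative of $u$ need vanish to infinite order at a point, and $\nabla u$ does not vanish on an open set. Concretely, if $\gamma$ is (say) a straight arc with unit tangent $(a,b)$ and $u\equiv c$ with $\nabla u\equiv 0$ on $\gamma$, differentiating along $\gamma$ and using $-\Delta u=\lambda_1 u$ gives $u_{xy}=\lambda_1 c\,ab$ on $\gamma$, which is generically nonzero; so $\partial_x u$ has nonzero normal derivative on $\gamma$ and Holmgren does not apply. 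In fact the paper's own Remark on the rectangle with $D$ equal to two opposite edges exhibits a first mixed eigenfunction whose interior critical set is an entire line segment, so the implication ``$\nabla u=0$ on an arc $\Rightarrow$ $u$ constant'' is simply false. The same flaw appears in your boundary argument.

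A symptom of the gap is that your interior argument never invokes the hypotheses that $P$ is simply connected and $D$ is connected; since the conclusion fails without them, they must enter somewhere essential. The paper uses them topologically: once one knows $\crit(u)$ contains an arc $\gamma$, the arc can only form a loop or (after showing $\overline{\crit}(u)\cap D=\emptyset$) terminate in $\overline N$; simple connectivity and connectedness of $D$ then let one close up $\gamma$ with a piece of $N$ to bound a subregion on which $u$ is a \emph{positive} Neumann eigenfunction, contradicting that nonconstant Neumann eigenfunctions change sign. For the boundary part, an infinite $\overline{\crit}(u)\cap\partial P$ forces an entire Neumann edge $e$ into $\overline{\crit}(u)$; then the tangential derivative $\partial_x u$ vanishes on $e$ together with its full gradient, and \emph{that} Cauchy data is enough to conclude $\partial_x u\equiv 0$ on $P$, which forces the rectangle structure. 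Your outline of the vertex and reflection analysis is fine and matches the paper; what is missing is this variational/topological replacement for the failed unique continuation step.
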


\begin{remk}\label{twosidedrectangle}
    We do not know whether $P$ being simply connected is a necessary hypothesis for Theorem \ref{finitecritset}. However the hypothesis that $D$ is connected in Theorem \ref{finitecritset} cannot be removed. For example, let $P$ be a rectangle with $D$ equal to the union of two opposite edges and $N$ equal to the union of the other two edges. The eigenfunctions in this case can be computed explicitly, and the critical set of each first eigenfunction is the line segment joining the midpoints of the Neumann edges of $P$. One wonders whether this is the only example of a triple $(P,D,N)$ with an infinite critical set. 
\end{remk}
We next define a large class of domains $\Omega$ and appropriate subsets $D,N\subseteq \partial\Omega$ for which $(\Omega,D,N)$ has no hot spots.

\begin{defn}\label{Gdom}
  Let $\Omega\subseteq\Rbb^2$ be a bounded Lipschitz domain. Suppose that, up to isometry, $\Omega$ is the region bounded by the $x$-axis and the graph of a continuous, piecewise smooth function $f:[a,b]\to[0,\infty)$ such that $f(x)>0$ for all $x\in(a,b)$. More precisely, $$\Omega=\{(x,y)\in\Rbb^2\mid a<x<b\;\;\text{and}\;\;0<y<f(x)\}.$$ We will call $\Omega$ a \textit{graph domain}.
\end{defn}

Not every piecewise smooth function yields a graph domain. For example, the domain bounded by the graph of $f(x)=x^2$ on $[0,1]$ is not a graph domain because it does not have Lipschitz boundary at $(0,0)$. However, any positive piecewise linear function gives a graph domain. See Figure \ref{Gdomains} for other examples.

\begin{figure}
    \centering
    \includegraphics[width=10cm]{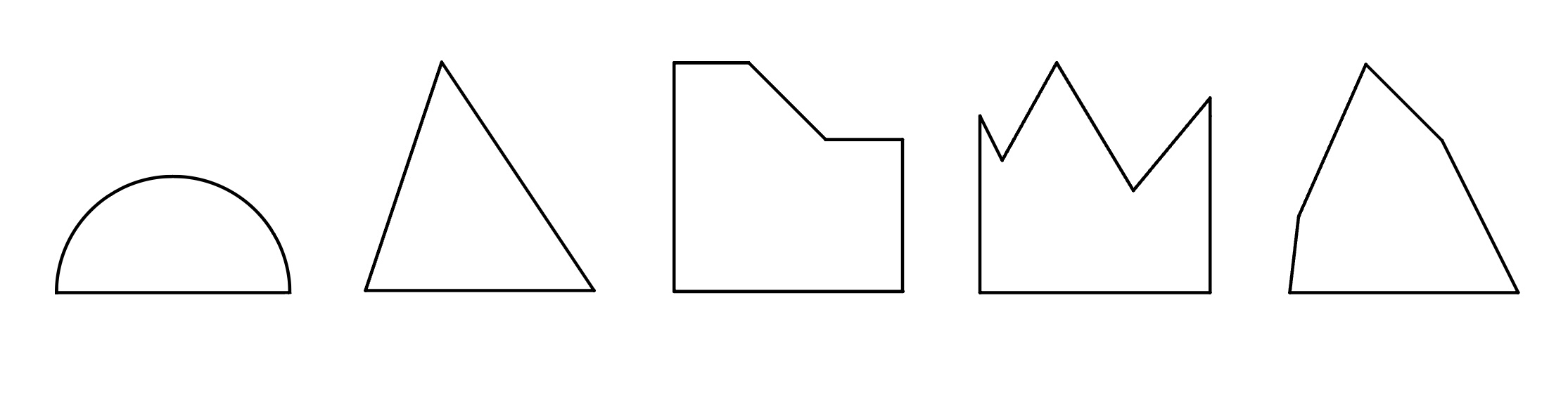}
    \caption{Some examples of graph domains. In Theorem \ref{Gpairs},
    $N$ equals the bottom edge of each of these domains and $D=\partial\Omega\setminus \overline{N}$.}
    \label{Gdomains}
\end{figure}

For some graph domain $\Omega$ bounded by a subset of the $x$-axis and bounded by the graph of a function $f:[a,b]\to [0,\infty)$, let $D=\partial\Omega\setminus([a,b]\times\{0\})$, and let $N=(a,b)\times\{0\}$. Let $u$ be a non-negative first mixed eigenfunction for $(\Omega,D,N)$. The next result shows that $(\Omega,D,N)$ has no hot spots and that we can bound the number of critical points and local extrema of $u$ with respect to the geometry of $\Omega$. Let $n$ be the number of strict local extrema $x$ of $f$ on $[a,b]$ for which $f(x)>0$ plus the number of intervals in $(f')^{-1}(0)$. For example, the value $n$ for each domain in Figure \ref{Gdomains}, from left to right, is $1$, $1$, $2$, $5$, and $1$. Note that $n$ may not be finite. 
\begin{thm}\label{Gpairs}\label{bound}
    Given a graph domain $\Omega$ with $D$ and $N$ as above, $-\partial_yu>0$ in $\Omega$.\footnote{Note that $\Omega$ is open. By the Neumann boundary conditions, $-\partial_yu$ vanishes on $N$.} In particular, $u$ has no interior critical points, and $\overline{\crit}(u)\subseteq N$. Moreover, $u$ has at most $n$ critical points on $N$. If $n$ is odd, then $u$ has at most $\frac{n+1}{2}$ local extrema. If $n$ is even, then $u$ has at most $\frac{n}{2}$ local extrema. If $n$ is infinite and $f(a),f(b)>0$, then $u$ has finitely many critical points on $N$.
\end{thm}

\begin{remk}\label{cabrechanillo}
    Let $\Omega$ be a graph domain with $D$ and $N$ as above. Let $\Omega'$ be the union of $\Omega$ 
    and its reflection over $N$. Let $u'$ be the extension of a first
    mixed eigenfunction $u$ of $(\Omega,D,N)$ to 
    $\Omega'$ via reflection. Since $u'\geq 0$, it follows that $u'$ is a
    first Dirichlet eigenfunction of $\Omega'$. If $\Omega'$ is smooth and strictly convex, the fact that there is exactly one critical point follows from the main result of \cite{CC}.
\end{remk}

Theorem \ref{Gpairs} is somewhat surprising because we expect there to exist hot spots when $D$ is sufficiently large. However, we can construct graph domains where $D$ comprises an arbitrarily large proportion of the boundary of $\Omega$ (for example, take $\Omega$ to be an acute triangle with $N$ equal to its shortest edge). \\
\indent The paper is organized as follows. We begin by studying the behavior of eigenfunctions near the vertices of polygons and graph domains in Section \ref{vertex}. We then use these results and the methods demonstrated in \cite{remarksoncritset} to prove Theorem \ref{finitecritset} in Section \ref{finite}. As in the Judge-Mondal proof of the hot spots conjecture for triangles, we then study in Section \ref{killing} the zero-level sets of various derivatives of first mixed eigenfunctions. In Section \ref{graphtheorem}, we prove Theorem \ref{bound}. In Section \ref{critedge}, we provide a technical classification of the critical points of first mixed eigenfunctions that will be essential in the proof of Theorem \ref{mainthm} in the case of obtuse triangles. Finally, in Section \ref{triangletheorem}, we prove Theorem \ref{mainthm}, which is the combination of Propositions \ref{obtusetwosided}, \ref{acuteNeumann}, and \ref{obtuseNeumann} below.

\section{Analysis at a vertex}\label{vertex} 
Let $\Omega$ be a polygon or graph domain. Let $v\in\partial\Omega$ be a boundary point such that, in a neighborhood of $v$, $\Omega$ is isometric to a circular sector given in polar coordinates by $S_{\epsilon}=\{re^{i\theta}\mid 0<\theta<\beta, 0<r<\epsilon\}$ for some $\beta\in(0,2\pi)$ and $\epsilon>0$. If such a sector exists, we will call $v$ a \textit{vertex} of $\Omega$. Let $u$ be an eigenfunction of the Laplace operator satisfying Neumann boundary conditions on $\partial\Omega\cap\partial S_{\epsilon}$. Using separation of variables, one can compute that $u$ has the following expansion valid in $S_{\epsilon}$ for some sufficiently small $\epsilon$ (see, e.g., \cite{judgemondal}):
\begin{equation}\label{neumannexpansion}
    u(re^{i\theta})=\sum_{n=0}^{\infty}a_n r^{n\nu}g_{n\nu}(r^2)\cos(n\nu\theta)
\end{equation}
where $a_n\in\Rbb$, $\nu=\frac{\pi}{\beta}$, and $r^{n\nu}g_{n\nu}(r^2)=J_{n\nu}(\sqrt{\lambda}r)$, where $J_{n\nu}$ is the Bessel function. Note that $g_{n\nu}$ is an entire function and $g^{(k)}_{n\nu}(0)\neq 0$ for all $k\in \Zbb_{\geq 0}$. \\
\indent Similarly, in a neighborhood of a Dirichlet vertex, we have the following expansion for $u$: 
\begin{equation}\label{dirichletexpansion}
u(re^{i\theta})=\sum_{n=1}^{\infty}b_nr^{n\nu}g_{n\nu}(r^2)\sin(n\nu\theta).
\end{equation}

Finally, suppose that $v$ is a mixed vertex whose Dirichlet edge is contained in the $x$-axis. Then we have the expansion
\begin{equation}\label{mixedexpansion}
    u(re^{i\theta})=\sum_{n=0}^{\infty}c_nr^{(n+\frac{1}{2})\nu}g_{(n+\frac{1}{2})\nu}(r^2)\sin\Big((n+\frac{1}{2})\nu\theta\Big).
\end{equation}

These expansions allow us to prove the following generalization of Lemma 18 of \cite{remarksoncritset}:

\begin{prop}\label{accumvertex}
    Let $u$ be a first mixed eigenfunction of $(\Omega,D,N)$. Then no vertex of $\Omega$ is an accumulation point of $\crit(u)$. If a vertex $v$ of $\Omega$ is an accumulation point of $\overline{\crit}(u)$, then $v$ is a Neumann vertex with angle $\pi/2$ or $3\pi/2$, and one of the edges adjacent to $v$ is a subset of $\overline{\crit}(u)$. 
\end{prop}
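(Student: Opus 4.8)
The plan is to use the vertex expansions \eqref{neumannexpansion}, \eqref{dirichletexpansion}, and \eqref{mixedexpansion} to control the order of vanishing of $\nabla u$ near each type of vertex, treating the three cases separately. First I would dispose of the easy cases. Near a Dirichlet or mixed vertex $v$, the leading term of the expansion is $b_1 r^{\nu}g_{\nu}(r^2)\sin(\nu\theta)$ (resp.\ $c_0 r^{\nu/2}g_{\nu/2}(r^2)\sin(\tfrac{\nu}{2}\theta)$); since $u\not\equiv 0$ and $u$ is a first eigenfunction (so $u>0$ in $\Omega$), the leading coefficient is nonzero, and a direct computation of $\partial_r u$ and $\partial_\theta u$ in polar coordinates shows that $|\nabla u|$ is bounded below by a positive multiple of $r^{\nu-1}$ (resp.\ $r^{\nu/2-1}$) on a punctured neighborhood of $v$ intersected with $\Omega$. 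Hence $v$ is not an accumulation point of $\crit(u)$, and for the boundary critical set one checks that along the edges (where $\theta=0$ or $\theta=\beta$) the relevant tangential derivative of the extension is likewise nonvanishing near $v$, so $v\notin\overline{\crit}(u)$ as an accumulation point either.

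The substantive case is a Neumann vertex, where the expansion \eqref{neumannexpansion} begins $a_0 g_0(r^2) + a_1 r^{\nu}g_{\nu}(r^2)\cos(\nu\theta)+\cdots$. Here the constant term $a_0$ can be nonzero, so $\nabla u$ genuinely vanishes at $v$ to some positive order, and the point is to extract the leading homogeneous part. I would argue that the first index $n\nu$ with $a_n\neq 0$ and $n\geq 1$ governs the behavior: writing $u = a_0 g_0(r^2) + a_k r^{k\nu}g_{k\nu}(r^2)\cos(k\nu\theta) + (\text{higher order})$ with $a_k\neq 0$, one computes $\nabla u = a_k k\nu\, r^{k\nu-1}\big(g_{k\nu}(0)\cos(k\nu\theta)\,\hat r - g_{k\nu}(0)\sin(k\nu\theta)\,\hat\theta\big) + O(r^{k\nu+1}) + O(r)$ (the $O(r)$ from differentiating $a_0 g_0(r^2)$, which is also $O(r)$). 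If $k\nu<2$, i.e.\ $\beta>k\pi/2$, then the $r^{k\nu-1}$ term dominates and has no zeros in $\{0<\theta<\beta\}$ near $r=0$ except possibly on the rays $\theta$ with $\cos(k\nu\theta)=\sin(k\nu\theta)=0$, which is impossible; so $v$ is not an accumulation point of $\crit(u)$, and it is an accumulation point of $\overline{\crit}(u)$ only if such a zero ray lies in an edge, forcing the edge into $\overline{\crit}(u)$. The delicate sub-case is $k\nu\geq 2$, where the gradient of the $a_0$-term competes with the homogeneous term; but by the Neumann condition the normal derivative on each edge vanishes identically, which pins down the structure: if $v$ were an accumulation point of critical points, a compactness/Bers-type scaling argument (rescale $u-u(v)$ by its sup on $\{r\leq \rho\}$ and pass to the limit) produces a nonzero homogeneous harmonic function on the sector satisfying Neumann conditions with a line of critical points, and the only such functions are the $r^{k\nu}\cos(k\nu\theta)$ with the line of critical points equal to an edge — which requires $\cos(k\nu\beta)=\pm 1$ and forces $\beta$ to be a right angle and $k\nu$ to be even; pushing this through shows the accumulation can only occur at a right-angled Neumann vertex with an adjacent edge contained in $\overline{\crit}(u)$.

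The main obstacle is exactly this last scaling argument: one must verify that the rescaled limit is a \emph{nonzero} homogeneous harmonic function (ruling out degeneration) and that its critical set being a full curve near the origin forces it to be one of the $\cos(k\nu\theta)$ modes with the edge as nodal-type locus. I expect the cleanest route is to avoid compactness and instead argue directly from \eqref{neumannexpansion}: let $k\geq 1$ be minimal with $a_k\neq 0$; along an edge, say $\theta=0$, the function $u(r,0)=a_0 g_0(r^2)+a_k r^{k\nu}g_{k\nu}(r^2)+\cdots$ and its first interior derivative $\partial_\theta u$ vanishes identically (Neumann), while $\partial_r u(r,0)$ has a definite leading term unless $a_k k\nu g_{k\nu}(0)$ is cancelled — and a short analysis of which powers of $r$ can appear shows cancellation along the whole edge forces $k\nu\in 2\Zbb$, hence (since also $g_0$ contributes even powers and these must match up) $\nu$ rational with the specific value making $\beta$ a right angle; then one identifies the edge as lying in $\overline{\crit}(u)$ by noting $\partial_\theta u\equiv 0$ there and $\partial_r u\to 0$ along it. This is essentially how Lemma 18 of \cite{remarksoncritset} is proved in the pure-Neumann setting, and the mixed and Dirichlet cases only make the conclusion stronger (no accumulation at all), so the generalization is obtained by adding the two easy cases above to that argument.
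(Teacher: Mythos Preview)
Your overall structure matches the paper's proof exactly: split into Neumann, Dirichlet, and mixed vertices; observe that positivity of $u$ forces $a_0$, $b_1$, $c_0\neq 0$; and read off the behavior of $\nabla u$ from the expansions. Your treatment of the Dirichlet and mixed cases is correct and essentially the paper's (it computes $\partial_r u$ for the interior and cites Lemma~\ref{noDircritpoints} for the edges, while you check the edges directly from the expansion; both are fine).

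The Neumann case is where you diverge, and here there is a genuine gap. The paper does not argue from scratch; it simply invokes Propositions~4.4 and~5.6 of \cite{judgemondal}. Your direct sketch does not close. Your dichotomy cleanly handles only $k\nu<2$; you do not say (though it is easy) that for $k\nu>2$ the radial contribution $2a_0 g_0'(0)\,r\,\hat r$ from the $a_0$ term dominates and is nonvanishing. In the borderline case $k\nu=2$ both of your suggested routes are incomplete: the scaling/Bers argument requires the blow-up limit to be a \emph{nonzero} homogeneous harmonic function on the sector and then a classification of such functions having a curve of critical points---which is precisely the content of the Judge--Mondal propositions you would be reproving; the power-matching argument asserts that cancellation along an edge forces $k\nu\in 2\Zbb$ and ``hence'' $\beta=\pi/2$, but $k\nu\in 2\Zbb$ does not by itself determine $\beta$ (e.g.\ $k=3$, $\nu=2/3$ gives $\beta=3\pi/2$, a case you must still exclude), and accumulation of zeros of $\partial_r u(r,0)$ at $r=0$ does not immediately yield $\partial_r u(\cdot,0)\equiv 0$, since the vertex expansion involves non-integer powers of $r$ and is not a priori real-analytic at $r=0$. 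Either cite the Judge--Mondal results as the paper does, or be prepared to reproduce a nontrivial portion of their analysis to finish the Neumann case.
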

\begin{proof}
    First note that since $u$ is the first mixed eigenfunction, it is positive in $\Omega$ and vanishes only in $D$. Thus, $a_0$, $b_1$, and $c_0$ in the above expansions are all non-zero. In the case of Neumann vertices, the result then follows from Propositions 4.4 and 5.6 of \cite{judgemondal}.\\
    \indent In the case of a Dirichlet vertex, we have $$\partial_ru = b_1\nu r^{\nu-1}g_{\nu}(0)\sin(\nu\theta)+o(r^{\nu-1}).$$ For sufficiently small $r$ and $0<\theta<\beta$, therefore, we have $\partial_ru\neq 0$. For $\theta\in\{0,\beta\}$, the result follows from Lemma \ref{noDircritpoints} below. \\
    \indent The case of a mixed vertex is similar to the Dirichlet case. 
\end{proof}

\section{Finiteness of the critical set}\label{finite}
The main aim of this section is to prove Theorem \ref{finitecritset}. Let $u$ be a non-negative first mixed eigenfunction for $(\Omega,D,N)$ where $\Omega$ is a polygon or graph domain. 

\begin{lem}\label{noDircritpoints}
    $\overline{\crit}(u)\cap D=\emptyset$.
\end{lem}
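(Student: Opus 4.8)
The plan is to reduce the claim to the Hopf boundary-point lemma. Recall that, by definition, $\overline{\crit}(u)$ contains no non-smooth point of $\partial\Omega$ and no endpoint of $\overline{D}$. Since $D$ is relatively open in $\partial\Omega$, every point of $D$ is either a non-smooth point of $\partial\Omega$ --- and hence automatically not in $\overline{\crit}(u)$ --- or a smooth point of $\partial\Omega$ that is not an endpoint of $\overline{D}$. So it suffices to fix a smooth point $p\in D$ that is not an endpoint of $\overline{D}$ and show $\nabla u(p)\neq 0$, where $\nabla u$ is the gradient of the $C^{\infty}$ extension of $u$ to a one-sided neighborhood of $p$ guaranteed by the regularity recalled in the introduction. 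Near $p$, $\partial\Omega$ is a $C^{\infty}$ arc contained in $D$, along which $u\equiv 0$; hence the tangential derivative of the extension vanishes at $p$, and $\nabla u(p)\neq 0$ is equivalent to $\partial_{\nu}u(p)\neq 0$, $\nu$ being the outward unit normal at $p$.

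To obtain $\partial_{\nu}u(p)\neq 0$, I would argue as follows. Because $\partial\Omega$ is $C^{\infty}$ near $p$, $\Omega$ satisfies an interior ball condition at $p$. Since $u$ is the first mixed eigenfunction, $u>0$ in $\Omega$ while $u(p)=0$, so $u$ attains its minimum over $\overline{\Omega}$ at $p$. Rewriting the equation as $(\Delta+\lambda)u=0$ in $\Omega$, the Hopf lemma applies --- its conclusion being valid regardless of the sign of the zeroth-order coefficient precisely because $u$ vanishes at the boundary point $p$ --- and gives $\partial_{\nu}u(p)<0$. Therefore $\nabla u(p)\neq 0$, so $p\notin\overline{\crit}(u)$, and since $p$ was an arbitrary point of $D$ of the relevant type, $\overline{\crit}(u)\cap D=\emptyset$.

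The potential obstacle here is not deep: it is mainly the bookkeeping needed to see that every point of $D$ falls into one of the two cases above, together with quoting the correct form of Hopf's lemma, which requires no sign hypothesis on the zeroth-order term once the boundary value is zero. As an alternative in the polygonal case, one can avoid Hopf's lemma by reflecting $u$ oddly across the straight Dirichlet arc through $p$: this produces a nontrivial solution $\tilde u$ of $(\Delta+\lambda)\tilde u=0$ on a disk whose nodal set coincides locally with that arc, and if $\nabla\tilde u(p)=0$ then the nodal set would branch at $p$ into at least two arcs, contradicting the fact that $\tilde u$ is positive on one side of the arc and negative on the other. For a curved smooth Dirichlet arc in a general graph domain one would first straighten the boundary by a local diffeomorphism and apply the same dichotomy to the resulting variable-coefficient elliptic equation; but the Hopf argument is the shortest and treats all cases at once.
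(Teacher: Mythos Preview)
Your argument is correct and is genuinely different from the paper's. The paper proceeds by contradiction: assuming $p\in\overline{\crit}(u)\cap D$, it extends $u$ smoothly across $p$, invokes Aronszajn's strong unique continuation theorem to get finite-order vanishing, and then uses the fact that the leading Taylor term at $p$ is a harmonic homogeneous polynomial of degree $\geq 2$ to force $u$ to change sign near $p$, contradicting $u>0$. Your route is considerably shorter: since $u>0$ in $\Omega$ and $-\Delta u=\lambda_1 u\geq 0$, the function $u$ is superharmonic, and Hopf's boundary-point lemma (in the form that needs no sign condition on the zeroth-order coefficient when the boundary value is zero, or equivalently applied to the purely second-order operator $\Delta$) gives $\partial_\nu u(p)<0$ directly at any smooth Dirichlet point with the interior ball condition. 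The paper's approach has the virtue of extracting more local structure (the harmonic-polynomial expansion), which is in the spirit of the nodal-set analysis used elsewhere in the paper; your approach buys brevity and avoids citing unique continuation. Your alternative odd-reflection argument for the polygonal case is essentially a hands-on version of the paper's harmonic-polynomial step, so it sits between the two.
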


\begin{proof}
    Since $u\geq 0$, $\Delta u=-\lambda u\leq 0$, so this follows from the Hopf lemma \cite{hopf}.
\end{proof}

For the next two results and the proof of Theorem \ref{finitecritset}, we suppose that $P$ is a polygon, $D$ is a union of edges of $P$, and $N=\partial P\setminus \overline{D}$.

\begin{prop}\label{interiorcrit}
    If $P$ is simply connected and $D$ is connected, then $\crit(u)$ does not contain an arc. 
\end{prop}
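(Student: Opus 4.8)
The plan is to argue by contradiction: suppose $\crit(u)$ contains an arc $\gamma$. Since $u$ is real-analytic in the interior of $P$ (being an eigenfunction of $-\Delta$ with analytic coefficients), the components of $\nabla u$ are real-analytic, and the vanishing set of a real-analytic function containing an arc forces a global structural constraint. More precisely, I would first show that if $\crit(u)$ contains an arc then in fact one can find an analytic curve along which both $\partial_x u$ and $\partial_y u$ vanish identically; differentiating along the curve and using the eigenvalue equation one expects to propagate this vanishing. The cleanest route is probably to pass to a level-set description: let $v = Lu$ for a generic constant vector field $L$, so $v$ is an eigenfunction of $-\Delta$ (same eigenvalue) and its nodal set $Z(v)$ is a locally finite union of analytic arcs meeting at finitely many singular points in any compact subset of the interior. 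A critical point of $u$ lies in $Z(L_1 u)\cap Z(L_2 u)$ for two independent directions, so an arc of critical points would be an arc contained in the nodal set of \emph{every} directional derivative $Lu$; this is a strong rigidity statement.

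The key steps, in order: (1) Reduce to showing that $\nabla u$ cannot vanish on an arc by using real-analyticity and the fact that for an eigenfunction, $\nabla u$ vanishing on an open set would force $u$ constant, hence $\lambda_1 = 0$, which is impossible since $D \neq \emptyset$. But vanishing on an arc is weaker than vanishing on an open set, so (2) the real work is to upgrade "vanishes on an arc" to "vanishes on an open set." Here I would use a unique-continuation / Bers-type argument: near a point of the arc, write $u$ in terms of its Taylor expansion; since $\nabla u$ vanishes on an analytic arc $\gamma$, after a rotation we may assume $\gamma$ is tangent to a coordinate axis at a point $p$, and then both $\partial_x u$ and $\partial_y u$ have $\gamma$ in their zero sets. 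Since $\partial_x u$ and $\partial_y u$ satisfy the same elliptic equation $-\Delta w = \lambda_1 w$, each vanishes to finite order, and its nodal set near $p$ looks like $2k$ analytic arcs through $p$ for some $k \geq 1$; for $\gamma$ to be a common component, the leading homogeneous harmonic parts must share a linear factor. Combined with the Cauchy–Riemann-type relation $\partial_x(\partial_x u) = -\partial_y(\partial_y u) - \lambda_1 u$ (from $\Delta u = -\lambda_1 u$) and $\partial_y(\partial_x u) = \partial_x(\partial_y u)$, one deduces that the full $2$-jet, then inductively all jets, of $\nabla u$ vanish along $\gamma$, forcing $\nabla u \equiv 0$ by analyticity — the contradiction.

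Alternatively, and perhaps more in the spirit of the cited \cite{remarksoncritset}, step (2) can be replaced by the observation that $\partial_x u$ and $\partial_y u$ are both solutions of $-\Delta w = \lambda_1 w$, so the pair $(\partial_x u, \partial_y u)$ defines a map whose components have an arc in common in their nodal sets; writing $\partial_x u + i\,\partial_y u$ and using that it is a solution of a first-order elliptic system (a perturbed Cauchy–Riemann equation, by Bers–Vekua theory) whose zero set is discrete unless the function vanishes identically, we conclude $\nabla u \equiv 0$, again impossible. The Bers–Vekua route handles the vanishing-order bookkeeping automatically and is the approach I would actually write up.

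The main obstacle is step (2): ruling out that $\nabla u$ vanishes on an arc without vanishing on an open set. Naively $\crit(u)$ could be a one-dimensional analytic variety (think of the degenerate rectangle example in Remark \ref{twosidedrectangle}, where the critical set genuinely is a segment — though there $D$ is disconnected, which is why the hypotheses of the proposition exclude it). So the argument must use something beyond analyticity alone; the Bers–Vekua discreteness of zeros of solutions to first-order elliptic systems is exactly the tool that makes the difference, and verifying that $\partial_x u + i\,\partial_y u$ solves such a system with the required regularity (including up to the Neumann boundary $N$, after reflection, which is why the hypothesis that $N$ is piecewise linear and $P$ is a polygon is used) is the technical heart of the proof. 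The simple connectivity of $P$ and connectedness of $D$ will enter when we globalize — e.g., to rule out $\crit(u)$ being a closed arc enclosing a subregion, via a degree or boundary-behavior argument using Lemma \ref{noDircritpoints} and Proposition \ref{accumvertex}.
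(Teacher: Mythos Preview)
Your proposal has a genuine gap, and in fact the rectangle example you yourself cite pinpoints it. Both of your proposed routes in step~(2) --- the inductive jet-vanishing argument and the Bers--Vekua argument --- are purely \emph{local}: if either worked, it would show that $\nabla u$ can never vanish along an arc for \emph{any} Laplace eigenfunction $u$, independent of boundary conditions or the topology of $P$. But the rectangle with $D$ equal to two opposite sides (Remark~\ref{twosidedrectangle}) gives $u(x,y)=\sin(\pi y/h)$, whose gradient vanishes identically on the interior segment $\{y=h/2\}$. So no local argument can succeed. Concretely, $f=\partial_x u+i\,\partial_y u=2\,\bar\partial u$ satisfies $\partial f=-\tfrac{\lambda_1}{2}u$, which is \emph{not} a closed Bers--Vekua system in $f$ alone (the right-hand side involves $u$, not $f$ or $\bar f$); hence the discreteness-of-zeros theorem does not apply, and indeed zeros of $f$ need not be isolated. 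Your remark that simple connectivity and connectedness of $D$ ``will enter when we globalize'' is the right instinct, but neither of your two mechanisms ever invokes those hypotheses.

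The paper's argument is global and short, and uses the hypotheses directly. One observes that $\crit(u)=\{|\nabla u|^2=0\}$ is (by real-analyticity) a locally finite graph; an arc $\gamma\subset\crit(u)$ is therefore either a loop, or (by Lemma~\ref{noDircritpoints}) has both endpoints in $\overline N$. In either case, since $P$ is simply connected and $D$ is connected, $\gamma$ together with a sub-arc $\eta\subset N$ bounds a subregion $\Omega\subset P$ whose boundary lies entirely in $\gamma\cup N$. On $\gamma$ we have $\nabla u=0$, hence $\partial_\nu u=0$; on $\eta\subset N$ the Neumann condition already gives $\partial_\nu u=0$. Thus $u|_\Omega$ is a \emph{positive} Neumann eigenfunction of $\Omega$ with eigenvalue $\lambda_1>0$, which is impossible since any non-constant Neumann eigenfunction changes sign. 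That is the whole proof; the connectedness of $D$ is exactly what guarantees that the region $\Omega$ cut off by $\gamma$ and $N$ does not meet $D$.
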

\begin{proof}
    Since $|\nabla u|^2$ is real-analytic away from the vertices, $\crit(u)$ is a locally finite graph with degree one vertices only in the boundary of $P$. Suppose that $\crit(u)$ does contain an arc $\gamma$. If this arc does not form a loop, then by Lemma \ref{noDircritpoints}, it has endpoints in $\overline{N}$. Thus, either $\gamma$ is a loop, or, since $P$ is simply connected and $D$ is connected, there is another curve $\eta\subseteq N$ such that $\overline{\gamma}\cup \eta$ is a loop where $\overline{\gamma}$ is the closure of $\gamma$. In either case, let $\Omega$ be the region bounded by this loop. The restriction of $u$ to $\Omega$ is then a non-constant, everywhere positive Neumann eigenfunction of $\Omega$. However, non-constant Neumann eigenfunctions must change signs, so we obtain a contradiction. 
\end{proof}

\begin{prop}\label{boundarycrit}
    If $P$ is simply connected, $D$ is connected, and $\overline{\crit}(u)\cap \partial P$ is infinite, then $P$ is a rectangle, and $D$ is an edge of $P$. 
\end{prop}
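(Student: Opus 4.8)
The plan is, under the stated hypotheses, to first produce a single Neumann edge $e$ of $P$ along which $\nabla u$ vanishes identically, then to upgrade this to $Lu\equiv 0$ on all of $P$ for the constant vector field $L$ parallel to $e$, and finally to read off from $Lu\equiv 0$ together with the boundary conditions that $P$ is a rectangle with $D$ one of its edges. For the first step, note that by Lemma \ref{noDircritpoints} every point of $\overline{\crit}(u)\cap\partial P$ lies in $N$, hence in one of the finitely many edges of $P$ whose union is $\overline N$; so some such edge $e$ contains infinitely many points of $\overline{\crit}(u)$, and these accumulate at some $p^*\in e$. If $p^*$ is interior to $e$, then since $u$ extends real-analytically across the flat Neumann segment $e$ (reflect evenly across the line containing $e$), the tangential derivative $\partial_\tau u$ is a real-analytic function of one variable along $e$ with infinitely many zeros near $p^*$, so it vanishes on all of $e$; with $\partial_\nu u\equiv 0$ on $e$ this gives $\nabla u\equiv 0$ on $e$. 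If $p^*$ is an endpoint of $e$, it is a vertex of $P$ accumulating $\overline{\crit}(u)$, so by Proposition \ref{accumvertex} it is a right-angled Neumann vertex with one of its two (necessarily Neumann) adjacent edges contained in $\overline{\crit}(u)$; renaming that edge $e$, again $\nabla u\equiv 0$ on $e$.

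For the second step, let $L$ be the constant unit vector field parallel to $e$ and put $w:=Lu$, which is smooth in $P$ and satisfies $-\Delta w=\lambda_1 w$ because $L$ has constant coefficients. Along the interior of $e$ we have $w=\partial_\tau u=0$, and $\partial_\nu w=\partial_\nu\partial_\tau u=\partial_\tau\partial_\nu u=0$ since $\partial_\nu u$ vanishes along the whole of $e$; differentiating the relation $\partial_\nu^2 w=-\partial_\tau^2 w-\lambda_1 w$ tangentially then shows inductively that every normal derivative of $w$ vanishes on $e$. Hence the real-analytic function $w$ vanishes to infinite order along $e$, so $w\equiv 0$ in a one-sided neighborhood of $e$, and the unique continuation theorem of \cite{aron} gives $w=Lu\equiv 0$ throughout $P$.

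For the third step, choose coordinates with $L=(1,0)$, so that $\partial_x u\equiv 0$, $\nabla u=(0,\partial_y u)$, and $u$ locally solves $-u''=\lambda_1 u$ along vertical segments. On any Dirichlet edge $u\equiv 0$, so there $\nabla u$ is a nonzero (Hopf lemma) multiple of the outward normal; as $\partial_x u=0$, that normal is vertical and the edge is horizontal. Since a connected union of horizontal edges of a polygon is a single edge, $D$ is one horizontal edge, say $D=(0,w)\times\{0\}$. An open--closed argument (the set where $u(x,y)=c\sin(\sqrt{\lambda_1}\,y)$ and $\partial_y u=c\sqrt{\lambda_1}\cos(\sqrt{\lambda_1}\,y)$ is open by uniqueness for the ODE, closed by continuity, and nonempty near $D$) shows $u(x,y)=c\sin(\sqrt{\lambda_1}\,y)$ on all of $P$ with $c>0$, so positivity forces $P\subseteq\Rbb\times(0,\pi/\sqrt{\lambda_1})$. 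On a Neumann edge, $0=\partial_\nu u=\nu_2\,c\sqrt{\lambda_1}\cos(\sqrt{\lambda_1}\,y)$ forces the edge to be vertical or horizontal at the single height $\pi/(2\sqrt{\lambda_1})$; in particular every edge of $P$ is horizontal or vertical, every horizontal edge is either $D$ (at height $0$) or Neumann at height $\pi/(2\sqrt{\lambda_1})$, and every vertical edge runs from height $0$ to height $\pi/(2\sqrt{\lambda_1})$. Traversing the Jordan curve $\partial P$ from $D$ with these constraints, one checks that $\partial P$ must consist of $D$, a vertical edge, one horizontal edge at height $\pi/(2\sqrt{\lambda_1})$, and a vertical edge; hence $P=(0,w)\times(0,\pi/(2\sqrt{\lambda_1}))$ and $D$ is an edge of $P$.

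The reduction in the first step and the unique continuation in the second are routine. I expect the main obstacle to be the bookkeeping at the end: ruling out every rectilinear polygon other than a rectangle that is a priori compatible with $u=c\sin(\sqrt{\lambda_1}\,y)$ and with $D$ a single bottom edge, in particular excluding notches in the Neumann boundary and any excursion of $\partial P$ above height $\pi/(2\sqrt{\lambda_1})$.
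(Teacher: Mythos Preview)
Your proposal is correct and follows essentially the same three-step approach as the paper: locate a full Neumann edge $e\subseteq\overline{\crit}(u)$, deduce that the tangential derivative $Lu$ vanishes identically on $P$, and then read off from the resulting one-variable solution that $P$ must be a rectangle with $D$ a single edge. The only differences are cosmetic: in Step~2 you give the Cauchy-data/infinite-order-vanishing argument directly where the paper cites Lemma~2 of \cite{remarksoncritset}, and in Step~3 you normalize so that $D$ (rather than $e$) lies on the $x$-axis, which replaces the paper's $\cos(\sqrt{\lambda_1}y)$ by your $\sin(\sqrt{\lambda_1}y)$ and makes the final rectilinear bookkeeping a bit longer---but that bookkeeping does go through, so the ``main obstacle'' you flag is not in fact an obstacle.
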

\begin{proof}
    By the real-analyticity of $|\nabla u|^2$ and Proposition \ref{accumvertex}, if $\overline{\crit}(u)\cap \partial P$ is infinite, then there is an edge $e\subseteq \overline{\crit}(u)$. By Lemma \ref{noDircritpoints}, $e\subseteq N$. Suppose that $e$ is contained in the $x$-axis and that, near this edge, $P$ lies in the upper half-plane. Then since the derivative $\partial_x$ commutes with the Laplacian, $\partial_xu$ is a Laplace eigenfunction that vanishes on $e$. Thus, $\partial_x\partial_xu=0$ on $e$. Since $u$ satisfies Neumann boundary conditions on $e$, we also have $\partial_y\partial_xu=\partial_x\partial_yu=0$. Since $\partial_xu$ vanishes on $e$ and $e\subseteq \overline{\crit}(\partial_xu)$, Lemma 2 of \cite{remarksoncritset} implies that $\partial_xu\equiv 0$ on $P$ (to apply the lemma, first extend $\partial_xu$ by reflection over $e$ to an open neighborhood of $e$).\\
    \indent Since $\partial_xu\equiv 0$, we see that $u$ is independent of $x$ and thus satisfies the ordinary differential equation $$\begin{cases}-\partial_y^2u=\lambda_1 u\;\;&\text{in}\;\;P\\\partial_yu=0\;\;&\text{in}\;\;\partial P\cap\{y=0\}\end{cases},$$ so $u$ is a scalar multiple of the function $\cos(\sqrt{\lambda_1}y)$. Since $u$ is positive off of $D$, it must be that $D$ is contained in the set $\{y=-\frac{\pi}{2\sqrt{\lambda_1}}\}\cup\{y=\frac{\pi}{2\sqrt{\lambda_1}}\}$, and $P$ is contained in the set $\{-\frac{\pi}{2\sqrt{\lambda_1}}<y<\frac{\pi}{2\sqrt{\lambda_1}}\}$. The Neumann edges of $P$ must then be vertical line segments. Since $D$ is connected, the result is thus proved. 
\end{proof}
\begin{proof}[Proof of Theorem \ref{finitecritset}]
    By computing the eigenfunction explicitly (see the proof of Proposition \ref{boundarycrit}), the result holds if $P$ is a rectangle and $D$ is an edge of $P$. Suppose therefore that $(P,D,N)$ is not of that form. If $\overline{\crit}(u)$ is infinite, then it contains an arc, contradicting Propositions \ref{interiorcrit} and \ref{boundarycrit}.
\end{proof}

We end the section by proving a finiteness result for graph domains that will be used in the proof of Theorem \ref{bound}.

\begin{prop}\label{finiteell}
    Let $\Omega$ be a graph domain with defining function $f:[a,b]\to[0,\infty)$. If $f(a)=0$ (resp. $f(b)=0$), then suppose that $f$ is linear in neighborhood of $a$ (resp. $b$). Let $u$ be a first mixed eigenfunction for $(\Omega,D,N)$. Then $\overline{\crit}(u)\cap N$ is a finite set. 
\end{prop}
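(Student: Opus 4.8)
The plan is to reduce the statement to a one-variable real-analyticity argument on the Neumann edge $N$, with the behaviour at the two endpoints of $N$ controlled by Proposition \ref{accumvertex}. Write $N=(a,b)\times\{0\}$ and let $u'$ be the reflection of $u$ across $N$; on the open set $\Omega\cup N\cup\sigma(\Omega)$, where $\sigma$ denotes reflection in the $x$-axis, $u'$ solves $-\Delta u'=\lambda_1 u'$ and is therefore real-analytic, and since $u'$ is even in $y$ we have $\partial_y u'(x,0)=0$ for every $x\in(a,b)$. Consequently a point $(x_0,0)\in N$ lies in $\overline{\crit}(u)$ if and only if $\phi(x_0)=0$, where $\phi(x):=\partial_x u'(x,0)$ is a real-analytic function on $(a,b)$.

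First I would show that $\phi\not\equiv 0$. If instead $\phi\equiv 0$, then $\partial_x u$ vanishes on $N$; differentiating the Neumann condition $\partial_y u=0$ tangentially along $N$ gives $\partial_y\partial_x u=\partial_x\partial_y u=0$ on $N$, so the eigenfunction $\partial_x u$ has vanishing Cauchy data on $N$, and the reflection-and-unique-continuation argument used in the proof of Proposition \ref{boundarycrit} (Lemma 2 of \cite{remarksoncritset}) forces $\partial_x u\equiv 0$ on $\Omega$. Then $u=u(y)$, so $u$ vanishes at every $y$-value attained by the Dirichlet boundary: on $f((a,b))$ via the graph, which is a non-degenerate interval unless $f$ is constant, and in the remaining case, where $\Omega$ is a rectangle, on the $y$-interval $(0,f(a))$ via a vertical Dirichlet edge. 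Since a non-zero solution of the ordinary differential equation $-v''=\lambda_1 v$ cannot vanish on an interval, this contradicts $u>0$.

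Hence $\phi$ is a non-zero real-analytic function on $(a,b)$, so its zero set $Z$ is discrete in $(a,b)$; thus $\overline{\crit}(u)\cap N$ has no accumulation point lying in $N$ itself. It remains to rule out accumulation of $Z$ at $a$ or at $b$, and this is exactly where the hypothesis on $f$ enters. If $f(a)>0$, then near $(a,0)$ the domain $\Omega$ is isometric to a circular sector of opening $\pi/2$ whose two sides are the Neumann edge (part of $N$) and the vertical Dirichlet edge $\{a\}\times(0,f(a))$; if $f(a)=0$, then since $f$ is linear with positive slope near $a$, $\Omega$ is near $(a,0)$ isometric to a sector of opening in $(0,\pi/2)$, again bounded by one Neumann side and one Dirichlet side. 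In either case $(a,0)$ is a \emph{mixed} vertex of $\Omega$, and the same holds at $(b,0)$. Were $Z$ to accumulate at $a$, the point $(a,0)$ would be an accumulation point of $\overline{\crit}(u)$, contradicting Proposition \ref{accumvertex}, which permits only right-angled Neumann vertices to be such accumulation points. Therefore $Z$, and hence $\overline{\crit}(u)\cap N$, is finite.

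I expect the only genuinely delicate step to be the unique-continuation argument in the case $\phi\equiv 0$: one must justify reflecting $\partial_x u$ across $N$ to a classical solution on an open neighborhood of $N$ (using that $\partial_x u$ satisfies the Dirichlet condition there), verify that $N$ lies in the critical set of this reflected function, and then translate ``$u$ is independent of $x$'' into a contradiction with the Dirichlet condition on the graph, or on the vertical edges in the degenerate rectangular case. Everything else is a routine combination of one-variable real-analyticity with the vertex dichotomy of Section \ref{vertex}.
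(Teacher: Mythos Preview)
Your argument is correct and follows essentially the same route as the paper: reduce to a one-variable real-analyticity statement on $N$ and invoke Proposition~\ref{accumvertex} at the endpoints $(a,0)$ and $(b,0)$, which are mixed (not Neumann) vertices under the stated hypotheses. The paper's proof is more economical in one place: rather than proving $\phi\not\equiv 0$ via the unique-continuation argument you import from Proposition~\ref{boundarycrit}, it simply observes that $\phi\equiv 0$ would force $N\subseteq\overline{\crit}(u)$, so the endpoints of $N$ would themselves be accumulation points of $\overline{\crit}(u)$---again contradicting Proposition~\ref{accumvertex}. In other words, Proposition~\ref{accumvertex} already does the work of your ``delicate step,'' and the entire argument collapses to three lines: an accumulation point must be interior (by \ref{accumvertex}), real-analyticity of $|\nabla u|^2$ then gives $N\subseteq\overline{\crit}(u)$, and the endpoints yield the contradiction (by \ref{accumvertex} again). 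Your version is not wrong, just longer than necessary.
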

\begin{proof}
    Suppose that $\overline{\crit}(u)\cap N$ is infinite. Let $p$ be an accumulation point of this set. By Proposition \ref{accumvertex}, $p$ is in the interior of $N$. The real-analyticity of $|\nabla u|^2$ implies that $N\subseteq \overline{\crit}(u)$. This contradicts Proposition \ref{accumvertex}.
\end{proof}

\section{Applying Killing fields to eigenfunctions}\label{killing}
We prove the other main theorems of the paper by studying the behavior of the zero-level sets of derivatives of each first mixed eigenfunction $u$. In particular, we will make use of derivatives given by constant vector fields parallel or perpendicular to certain edges of $\Omega$ as well as the rotational vector field $-y\partial_x+x\partial y$. In both cases, these vector fields commute with the Laplacian as in the proof of Proposition \ref{boundarycrit}. If $L$ denotes one of these vector fields, therefore, $Lu$ is also an eigenfunction of the Laplacian with the same eigenvalue as $u$ (though $Lu$ will not generally satisfy any specific boundary conditions). Before studying eigenfunctions of this form, we first prove a few results about general eigenfunctions with eigenvalue $\lambda_1$, where $\lambda_1$ is the first mixed eigenvalue of a polygon or graph domain $(\Omega,D,N)$. Throughout the section, we let $u$ be a corresponding first mixed eigenfunction. \\
\indent Given some function $\phi:\overline{\Omega}\to \Rbb$, let $\Zcal(\phi)=\phi^{-1}(\{0\})$. When $\phi=Xu$ with $X$ a constant or rotational vector field, then $\Zcal(\phi)$ is a locally finite graph (see Section 3 and Proposition 6.2 of \cite{judgemondal}) with degree one vertices only in the boundary of $\Omega$. The following results will primarily be applied to the restriction of $Xu$ to a connected component of $\Omega\setminus \Zcal(Xu)$. 
\begin{lem}\label{negativeintegral}
    Let $U\subseteq \Omega$ be an open set such that ${\rm int}(\Omega\setminus U)\neq \emptyset$. Let $\phi\in H^1(\Omega)\setminus \{0\}$ with $\supp \phi\subseteq U$ and $\phi\equiv 0$ on $D$. Further suppose that $\phi$ is smooth in $U$ and satisfies $-\Delta\phi=\lambda_1\phi$. Then $$\int_{\partial U\cap\partial \Omega}\phi\partial_{\nu}\phi>0.$$  
\end{lem}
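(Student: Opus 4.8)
The plan is to use $\phi$ itself as a test function for the Rayleigh quotient associated to $\lambda_1$, exploiting that $\phi$ vanishes on $D$ so it is an admissible competitor, and that $\phi$ is supported in a proper subset $U$ of $\Omega$. Concretely, since $\phi\in H^1(\Omega)$, $\phi|_D=0$, and $\supp\phi$ is compactly contained away from some open piece of $\Omega\setminus U$, the function $\phi$ lies in the form domain of the mixed eigenvalue problem and is \emph{not} a first eigenfunction (a first eigenfunction is positive everywhere in $\Omega$ by the discussion in the introduction, whereas $\phi$ vanishes on a nonempty open subset of $\Omega$). Therefore the variational characterization of $\lambda_1$ gives the \emph{strict} inequality $\int_\Omega |\nabla\phi|^2 > \lambda_1 \int_\Omega \phi^2$.

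Next I would integrate by parts. Because $\phi$ is smooth on $U$ and satisfies $-\Delta\phi=\lambda_1\phi$ there, and because $\phi\equiv 0$ outside $U$ (so all boundary terms on $\partial U$ interior to $\Omega$ vanish, as $\phi$ and $\nabla\phi$ are zero there), we get
\begin{equation*}
\int_\Omega |\nabla\phi|^2 = \int_U |\nabla\phi|^2 = \lambda_1\int_U \phi^2 + \int_{\partial U \cap \partial\Omega} \phi\,\partial_\nu\phi.
\end{equation*}
Combining this identity with the strict Rayleigh-quotient inequality $\int_\Omega|\nabla\phi|^2 > \lambda_1\int_\Omega\phi^2 = \lambda_1\int_U\phi^2$ yields exactly $\int_{\partial U\cap\partial\Omega}\phi\,\partial_\nu\phi > 0$, which is the claim.

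The main obstacle is justifying the integration by parts and the boundary-term bookkeeping with the regularity actually available: $\phi$ is only assumed $H^1(\Omega)$ globally and smooth in $U$, so one must be careful that $\nabla\phi$ has no distributional singular part across $\partial U$ (this is where $\supp\phi\subseteq U$, rather than just $\phi$ smooth on $U$, is used — it forces $\phi$ and its gradient to vanish on the part of $\partial U$ lying in the interior of $\Omega$, at least in the $H^1$ trace sense). One should also confirm that the only surviving boundary contribution is over $\partial U\cap\partial\Omega$, and that on the Neumann part of $\partial\Omega$ outside $\overline{U}$ there is nothing (since $\phi\equiv0$ there) while on $D$ we have $\phi=0$; so in fact the integral is really over $\partial U\cap N$. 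A secondary point requiring care is the strictness of the eigenvalue inequality: it relies on simplicity of $\lambda_1$ together with $\phi$ not being a scalar multiple of $u$, which follows since $u>0$ in $\Omega$ but $\phi$ vanishes on the nonempty open set ${\rm int}(\Omega\setminus U)$. The rest is routine.
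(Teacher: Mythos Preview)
Your proposal is correct and follows essentially the same route as the paper: integration by parts to obtain $\int_\Omega|\nabla\phi|^2=\lambda_1\int_\Omega\phi^2+\int_{\partial U\cap\partial\Omega}\phi\,\partial_\nu\phi$, combined with the variational characterization of $\lambda_1$ and the observation that $\phi$ cannot be a first eigenfunction since it vanishes on a nonempty open subset of $\Omega$. The only cosmetic differences are that the paper phrases the argument by contradiction and invokes unique continuation rather than the positivity of $u$ to rule out $\phi$ being a first eigenfunction.
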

\begin{proof}
    If not, then integration by parts gives $$\int_{\Omega}|\nabla \phi|^2=\lambda_1\int_{\Omega}|\phi|^2+\int_{\partial U\cap \partial \Omega}\phi\partial_{\nu}\phi\leq \lambda_1\int_{\Omega}|\phi|^2.$$ Using the variational formulation of the mixed eigenvalue problem, we see that we must actually have equality above and that $\phi$ is the first mixed eigenfunction of $(\Omega,D,N)$. However, since $\phi$ vanishes on an open set, this violates unique continuation, a contradiction. 
\end{proof}

We will use Lemma \ref{negativeintegral} along with the following formula to derive several contradictions in Section \ref{triangletheorem}. This formula was introduced by Terence Tao in \cite{polymath}, and another proof can be found in \cite{erratum}. In the following lemma, we let $P$ be a polygon, $D$ a union of edges of $P$, and $N=\partial P\setminus D$.
\begin{lem}\label{intFormula}
    Let $L$ be a constant vector field, and let $e\subseteq N$ be an edge of $P$. Suppose that $p$ (resp. $q$) is a point on $e$ that is either a critical point of $u$ or a Neumann vertex. Let $\ell$ be the line segment with endpoints $p$ and $q$. If $\partial_{\tau}$ is the unit length counterclockwise tangent vector to $e$ and $q-p$ points in the same direction as $\partial_{\tau}$, then $$\int_{\ell}Lu\partial_{\nu}Lu=-\frac{1}{2}\lambda_1\langle L,\partial_{\tau}\rangle \langle L,\partial_{\nu}\rangle\big(u(q)^2-u(p)^2\big).$$ 
\end{lem}

\begin{lem}\label{noloops}
    Let $U\subseteq \Omega$ be an open set. Suppose that $\phi\in H^1(U)\setminus\{0\}$ with $\phi|_{\partial U}\equiv 0$ and that $\phi$ is smooth in $U$. If $-\Delta\phi=\lambda \phi$, then $\lambda>\lambda_1$. 
\end{lem}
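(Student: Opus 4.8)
The plan is to argue by contradiction via the variational characterization of $\lambda_1$, extending $\phi$ by zero outside $U$ to get a valid competitor. Suppose $-\Delta\phi = \lambda\phi$ in $U$ with $\phi\in H^1(U)$, $\phi|_{\partial U}\equiv 0$, and $\phi$ smooth in $U$, but $\lambda\leq\lambda_1$. First I would set $\Phi$ to be the extension of $\phi$ to all of $\Omega$ by zero. Since $\phi\in H^1(U)$ and $\phi$ vanishes on $\partial U$, the extension $\Phi$ lies in $H^1(\Omega)$; moreover $\Phi\equiv 0$ on $D$ (since $\Phi$ vanishes identically on $\Omega\setminus U\supseteq$ a neighborhood in $\Omega$ of most of $D$ — more carefully, $\Phi$ vanishes on all of $\partial U$, and $D\subseteq\partial\Omega$, and $\Phi$ is zero outside $U$, so $\Phi|_D=0$). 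Hence $\Phi$ is an admissible test function for the Rayleigh quotient defining $\lambda_1$.

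Next I would compute the Rayleigh quotient of $\Phi$. Since $\Phi$ is supported in $\overline{U}$ and $\phi$ is smooth in $U$ with $\phi|_{\partial U}=0$, integration by parts over $U$ gives
$$\int_\Omega |\nabla\Phi|^2 = \int_U|\nabla\phi|^2 = \int_U (-\Delta\phi)\,\phi + \int_{\partial U}\phi\,\partial_\nu\phi = \lambda\int_U|\phi|^2 = \lambda\int_\Omega|\Phi|^2,$$
where the boundary term vanishes because $\phi|_{\partial U}=0$. Therefore the Rayleigh quotient of $\Phi$ equals $\lambda\leq\lambda_1$. By the variational principle, $\lambda_1$ is the infimum of the Rayleigh quotient over all nonzero $H^1(\Omega)$ functions vanishing on $D$, so we must have $\lambda=\lambda_1$ and $\Phi$ must be a minimizer, hence a first mixed eigenfunction of $(\Omega,D,N)$. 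But $\Phi$ vanishes identically on the nonempty open set ${\rm int}(\Omega\setminus U)$ — wait, this hypothesis is not stated in Lemma \ref{noloops}; I should instead note that if $U\subsetneq\Omega$ then $\Phi$ vanishes on the open set $\Omega\setminus\overline{U}$, and if $U=\Omega$ then $\phi|_{\partial\Omega}=0$ forces Dirichlet conditions everywhere, contradicting that $N\neq\emptyset$ carries Neumann data for the true first eigenfunction while $\Phi$ would satisfy $\Phi|_{\partial\Omega}=0$, again making $\Phi$ identically zero by simplicity unless the first eigenfunction itself vanishes on $N$, which it does not. In the generic case $U\subsetneq\Omega$, $\Phi$ vanishes on a nonempty open subset of $\Omega$, so by unique continuation for the equation $-\Delta\Phi=\lambda_1\Phi$ (as used in the proof of Lemma \ref{negativeintegral}), $\Phi\equiv 0$ on $\Omega$, contradicting $\phi\not\equiv 0$.

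The main obstacle I anticipate is the boundary-regularity bookkeeping: making sure that $\Phi\in H^1(\Omega)$ and that $\Phi|_D=0$ in the trace sense, given only that $\phi$ is smooth in the \emph{open} set $U$ and $\phi\in H^1(U)$ with $\phi|_{\partial U}\equiv 0$. The cleanest route is to observe that the zero-extension of an $H^1(U)$ function that vanishes on $\partial U$ (in the appropriate sense, e.g. $\phi\in H^1_0$ of the relevant part of the boundary) is again $H^1$, which holds because $U$ has the segment property inherited from $\Omega$ being Lipschitz; then $\Phi|_D=0$ is automatic since $\Phi$ is literally zero on a one-sided neighborhood of $D$ except where $D$ meets $\partial U$, a set of measure zero in $D$. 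Once this is in place, the variational-plus-unique-continuation argument is routine. A small additional point to handle is the degenerate case $U=\Omega$, which I would dispose of by noting that then $\phi$ would be a Dirichlet eigenfunction of $\Omega$ with eigenvalue $\lambda\leq\lambda_1$, but the first Dirichlet eigenvalue of $\Omega$ strictly exceeds $\lambda_1$ since $N\neq\emptyset$ (strict domain monotonicity / the fact that enlarging the Neumann part strictly decreases the ground-state energy), giving $\lambda>\lambda_1$ directly.
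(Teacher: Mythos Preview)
Your proposal is correct and follows essentially the same approach as the paper: extend $\phi$ by zero to obtain an admissible competitor in $H^1_0(\Omega)$, invoke the variational characterization of $\lambda_1$, and derive a contradiction from $\Phi$ being a first mixed eigenfunction. The paper's contradiction is slightly more direct---it simply observes that the extension vanishes on all of $\partial\Omega$, whereas the first mixed eigenfunction is positive on $N$---so your case split $U\subsetneq\Omega$ versus $U=\Omega$ and the appeal to unique continuation are unnecessary.
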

\begin{proof}
    Suppose that $\lambda\leq \lambda_1$. Extend $\phi$ to be equal to $0$ outside of $U$ so that $\phi\in H^1_0(\Omega)$. Using the variational formulation of the eigenvalue problem as in the proof of Lemma \ref{negativeintegral}, we see that $\phi$ is the first mixed eigenfunction for $(\Omega,D,N)$, which is absurd since it is identically $0$ on $\partial \Omega$.
\end{proof}

\begin{lem}\label{neumannloop}
    Suppose that $U\subseteq \Omega$ is an open set such that ${\rm int}(\Omega\setminus U)\neq \emptyset$ and that $\phi\in H^1(U)\setminus\{0\}$ is smooth in $U$. Further suppose that $\phi$ satisfies Neumann boundary conditions on $\partial U\cap N$ and that $\phi\equiv 0$ on $\partial U\setminus N$. If $-\Delta \phi=\lambda\phi$, then $\lambda>\lambda_1$.
\end{lem}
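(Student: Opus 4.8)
The plan is to mimic the variational argument used in Lemmas \ref{negativeintegral} and \ref{noloops}, but now with the extension of $\phi$ by zero regarded as a competitor for the mixed Rayleigh quotient rather than for the Dirichlet one. First I would extend $\phi$ to all of $\Omega$ by setting $\phi\equiv 0$ on $\Omega\setminus U$; since $\phi\in H^1(U)$ and $\phi\equiv 0$ on the part $\partial U\setminus N$ of $\partial U$ lying in the interior of $\Omega$ or in $D$, the extension lies in $H^1(\Omega)$ and vanishes on $D$, so it is an admissible test function for the mixed eigenvalue problem. (Here one uses that $\phi$ restricted to $\partial U\cap D\subseteq \partial U\setminus N$ is zero, so no Dirichlet boundary term is violated.)

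Next I would suppose for contradiction that $\lambda\le\lambda_1$ and compute the Rayleigh quotient of the extended $\phi$. Integration by parts on $U$ gives
\begin{equation*}
\int_{\Omega}|\nabla\phi|^2=\int_{U}|\nabla\phi|^2=\lambda\int_{U}\phi^2+\int_{\partial U}\phi\,\partial_{\nu}\phi=\lambda\int_{\Omega}\phi^2+\int_{\partial U\cap N}\phi\,\partial_{\nu}\phi,
\end{equation*}
where the boundary integral over $\partial U\setminus N$ drops because $\phi\equiv 0$ there, and the integral over $\partial U\cap N$ drops because $\phi$ satisfies Neumann conditions there, i.e. $\partial_{\nu}\phi=0$ on $\partial U\cap N$. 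Hence $\int_{\Omega}|\nabla\phi|^2=\lambda\int_{\Omega}\phi^2\le\lambda_1\int_{\Omega}\phi^2$. By the variational characterization of $\lambda_1$ as the infimum of the mixed Rayleigh quotient over $H^1(\Omega)$-functions vanishing on $D$, equality must hold throughout, so $\lambda=\lambda_1$ and $\phi$ is a first mixed eigenfunction. But $\phi\equiv 0$ on the nonempty open set ${\rm int}(\Omega\setminus U)$, which contradicts unique continuation for eigenfunctions of $-\Delta$. Therefore $\lambda>\lambda_1$.

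The only point requiring care — and the main potential obstacle — is the justification that the zero-extension of $\phi$ is genuinely in $H^1(\Omega)$ and that no boundary term is silently lost when integrating by parts on $U$, since $\partial U$ may be irregular where it meets $\partial\Omega$ (for instance at vertices, or where a component of $\Omega\setminus\Zcal(Xu)$ abuts the boundary). In the intended applications $U$ is a connected component of $\Omega\setminus\Zcal(Xu)$ for $Xu$ a derivative of $u$ by a constant or rotational field, so $\partial U$ consists of arcs of the locally finite graph $\Zcal(Xu)$ together with pieces of $\partial\Omega$, and $\phi=Xu$ is smooth up to the relevant boundary pieces away from finitely many points; the $H^1$ membership of the extension across $\Zcal(Xu)$ follows because $\phi$ vanishes continuously there, and the integration by parts is justified by exhausting $U$ by smooth subdomains avoiding the finitely many singular points, exactly as in the proof of Lemma \ref{negativeintegral}. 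With this in hand the argument is complete.
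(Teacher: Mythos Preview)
Your argument is correct and is essentially the same as the paper's: extend $\phi$ by zero to all of $\Omega$, use the variational characterization of $\lambda_1$ to conclude that if $\lambda\le\lambda_1$ then $\phi$ must be a first mixed eigenfunction, and then reach a contradiction via unique continuation since $\phi$ vanishes on the nonempty open set ${\rm int}(\Omega\setminus U)$. The paper's proof is a two-line sketch referring back to Lemma~\ref{noloops}; you have simply filled in the integration-by-parts and $H^1$-extension details that the paper leaves implicit.
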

\begin{proof}
    Extend $\phi$ to be equal to $0$ outside of $U$. As in the proof of Lemma \ref{noloops}, if $\lambda\leq \lambda_1$, then the variational formulation of the eigenvalue problem shows that $\phi$ is the first mixed eigenfunction, contradicting unique continuation since $\phi$ vanishes on an open set.
\end{proof}
\begin{remk}
    Most of our applications of the above results will be applied to derivatives of a first mixed eigenfunction $u$. It is well known that eigenfunctions satisfying Dirichlet or Neumann conditions on a line segment extend to be analytic on (the interior of) the line segment. Near convex Neumann and Dirichlet vertices, expansions (\ref{neumannexpansion}) and (\ref{dirichletexpansion}) show that eigenfunctions are locally in $H^2$. Near mixed vertices with angle at most $\pi/2$, expansion (\ref{mixedexpansion}) shows that eigenfunctions are locally in $H^2$. Thus, near these vertices, derivatives of eigenfunctions are locally in $H^1$, and we can apply the above results to these derivatives near appropriate vertices. 
\end{remk}

Suppose that $\Omega$ contains a vertex $v$ such that $\Omega$ is isometric to a circular sector in a neighborhood of $v$. Using the expansions introduced in Section \ref{vertex}, we can determine exactly when $v$ is also a vertex of $\Zcal(Xu)$ when $X$ is a constant vector field. This is a generalization of Lemma 2.2 \cite{erratum} and Lemma 2.1 \cite{polygons}. In the results below, we always assume that the vertex is embedded in $\Rbb^2$ as described in Section \ref{vertex}. 
\begin{lem}\label{deg1Neumann}\label{deg1Dirichlet}\label{deg1Mixed}
    Let $v$ be a Neumann, Dirichlet, or mixed vertex of $\Omega$ with angle $\beta<2\pi$. Let $X$ be a constant vector field whose angle with the positive $x$-axis modulo $\pi$ is $\delta$. Then $v$ is at most a degree one vertex of $\Zcal(Xu)$. If $v$ is a Neuman vertex, then we have
    \begin{enumerate}
        \item If $\beta<\pi/2$ or $a_1=0$ and $\beta<\pi$, then $v$ is a vertex of $\Zcal(Xu)$ if and only if $\delta\in\big[\frac{\pi}{2},\frac{\pi}{2}+\beta\big]+\pi\Zbb.$
        \item If $\frac{\pi}{2}<\beta<\pi$ and $a_1\neq 0$, then $v$ is a vertex of $\Zcal(Xu)$ if and only if $\delta\in \big[\beta-\frac{\pi}{2},\frac{\pi}{2}\big]+\pi\Zbb.$
    \end{enumerate}
    If $v$ is a Dirichlet vertex, then we have 
    \begin{enumerate}
        \item If $0<\beta<\pi$, then $v$ is a vertex of $\Zcal(Xu)$ if and only if $\delta\in \big[\beta,\pi\big]+\pi\Zbb.$
        \item If $\pi<\beta<2\pi$, then $v$ is a degree one vertex of $\Zcal(Xu)$ if and only if $\delta\in\big[0,\beta-\pi\big]+\pi\Zbb.$
    \end{enumerate}
    Finally, if $v$ is a mixed vertex, then we have
    \begin{enumerate}
        \item If $0<\beta\leq\pi/2$, then $v$ is a vertex of $\Zcal(Xu)$ if and only if 
    $\delta\in\big[\beta+\frac{\pi}{2},\pi\big]+\pi\Zbb.$
        \item If $\pi/2<\beta<\pi$, then $v$ is a degree one vertex of $\Zcal(Xu)$ if and only if $\delta\in\big[0,\beta-\frac{\pi}{2}\big]+\pi\Zbb$
    \end{enumerate}
\end{lem}
\begin{proof}
    Because $u$ is a first mixed eigenfunction, it does not vanish in $\Omega$. Therefore, regardless of the boundary conditions, the first coefficient ($a_0$, $b_1$, or $c_0$) in the appropriate expansion never vanishes. The Neumann case then follows from Lemma 2.1 of \cite{polygons}. The Dirichlet and mixed cases are proved similarly. 
\end{proof}

\begin{lem}\label{deg1rotational}
    Let $v$ be a mixed vertex of $\Omega$ such that the Dirichlet edge adjacent to $v$ lies in the positive $x$-axis. Let $R$ be a rotational vector field centered at a point in the $x$-axis. If the angle $\beta$ at $v$ is less than $\pi$ and the center of $R$ is not the origin, then there is some neighborhood $U$ of $v$ such that $\Zcal(Ru)$ does not intersect $U$. If $R$ is centered at the origin, then there is a neighborhood $U$ of $v$ such that $\Zcal(Ru)\cap U$ is contained in the Neumann edge adjacent to $v$.
\end{lem}
\begin{proof}
    If $R$ is centered at the origin, then $R=\partial_{\theta}$, and one can use expansion (\ref{mixedexpansion}) to get the result since $c_0\neq 0$. A rotational vector field $R$ centered at a point $(a,0)\neq(0,0)$ can be expressed as $R=-y\partial_x+(x-a)\partial_y$. In polar coordinates, this is $R=-a\sin\theta\partial_r+(1-\frac{a}{r}\cos\theta)\partial_{\theta}$. Since $c_0\neq 0$, we have $$u(re^{i\theta})=c_0g_{\frac{1}{2}\nu}(0)r^{\frac{1}{2}\nu}\sin\Big(\frac{1}{2}\nu\theta\Big)+O(r^{\min\{\frac{1}{2}\nu+2,\frac{3}{2}\nu\}}).$$ We then compute $$Ru(re^{i\theta})=\frac{-a}{2}\nu c_0g_{\frac{1}{2}\nu}(0)r^{\frac{1}{2}\nu-1}\cos\Big(\big(\frac{1}{2}\nu-1\big)\theta\Big)+O(r^{\frac{1}{2}\nu}),$$ and the result follows. 
\end{proof}

\section{First mixed eigenfunctions of graph domains: proofs of Theorems \ref{Gpairs} and \ref{bound}}\label{graphtheorem}

Here we prove the Theorem \ref{bound}. We begin with a result that reduces the proof to the case that the function $f$ defining a graph domain is piecewise linear.

\begin{prop}\label{domainapprox}
    Let $\Omega_1\subseteq\Omega_2\subseteq...$ be a sequence of graph domains with $D_n$ and $N_n$ as in the statement of Theorem \ref{bound}. For each $n$, let $u_n$ be a first mixed eigenfunction for $(\Omega_n,D_n,N_n)$ with $\|u_n\|_{L^2(\Omega_n)}=1/\sqrt{2}$. Suppose that $\Omega=\cup_n\Omega_n$ is also a graph domain. Identify each $u_n$ with its extension by $0$ to a function on $\Omega$. Then, passing to a subsequence if necessary, $\{u_n\}$ converges in $H^1(\Omega)$ to a first mixed eigenfunction $u\neq 0$ of $(\Omega,D,N)$.
\end{prop}
\begin{proof}
    \indent As in Remark \ref{cabrechanillo}, extend each $u_n$ by reflection to the first Dirichlet eigenfunction $u_n'$ of the double $\Omega_n'$ of $\Omega_n$. Then $\|u_n'\|_{L^2(\Omega')}=1$ (where $\Omega'$ is the double of $\Omega$) for all $n$. We will show that some subsequence of $\{u_n'\}$ converges in $H^1_0(\Omega')$ to a non-negative first Dirichlet eigenfunction $u'$ of $\Omega'$. This eigenfunction restricts to a non-negative first mixed eigenfunction $u$ for $(\Omega,D,N)$.\\
    \indent For each $n$, let $\lambda_1^n$ denote the first mixed eigenvalue of $(\Omega_n,D_n,N_n)$. Because $\Omega'$ has Lipschitz boundary, Theorem 1.2.2.2 of \cite{grisvard} shows that $\Omega'$ has the restricted cone property of Definition 2.1 of \cite{agmon}. We may thus apply Theorem 1.5 of \cite{rauchtaylor} to see that $\lambda_1^n\to\lambda_1$ as $n\to\infty$. Since $\|u_n'\|_{H^1_0(\Omega')}^2=\lambda_1^n$ is uniformly bounded in $n$, we may pass to a subsequence, also denoted $\{u_n'\}$, that converges weakly in $H^1_0(\Omega')$ to some $u'$. Since the $u_n'$ are $L^2$-normalized, we have $\|u'\|_{L^2(\Omega')}=1$. For all $\phi\in C^{\infty}_0(\Omega')$, the support of $\phi$ is compactly contained in the support of $u_n'$ for all $n$ sufficiently large, so we have $$\int_{\Omega'}\nabla u'\cdot\nabla \phi=\lim_{n\to\infty}\int_{\Omega'}\nabla u_n'\cdot\nabla\phi=\lim_{n\to\infty}\lambda_1^n\int_{\Omega'}u_n'\phi=\lambda_1\int_{\Omega'}u'\phi.$$ It follows that $u'$ is a non-negative first Dirichlet eigenfunction for $\Omega'$. Therefore, $$\int_{\Omega'}|\nabla (u_n'-u')|^2=\int_{\Omega'}|\nabla u_n'|^2+\int_{\Omega'}|\nabla u'|^2-2\int_{\Omega'}\nabla u_n'\cdot \nabla u=\lambda_1^n+\lambda_1-2\lambda_1^n\int_{\Omega'}u_n'u'\to0.$$
\end{proof}

    \begin{proof}[Proof of Theorem \ref{bound}]
    \indent The first two sentences follow from Theorem 1.3 of \cite{nirenberg}. We will prove the remaining statements first for graph domains with piecewise linear defining functions. By approximating a general $f$ by a non-decreasing sequence of piecewise linear functions, each of which having at most as many local extrema and stationary intervals as $f$, the general result for $n$ finite follows from Proposition \ref{domainapprox}. If $n$ is infinite and $f(a),f(b)>0$, then Proposition \ref{finiteell} shows that the critical set is finite. \\
    \indent So suppose that $f$ is piecewise linear. By Proposition \ref{finiteell}, $u$ has at most finitely many critical points on $N$. Since $u$ vanishes at the endpoints of $N$ and $u\geq 0$, the number $\ell$ of critical points that are local extrema of $u|_{N}$ is odd. By extending $u$ to a neighborhood of $N$ via reflection, $u$ is a non-constant subharmonic function in a neighborhood of $N$, so it cannot have any local minima by the strong maximum principle for subharmonic functions. Thus, the local minima of $u|_{N}$ are not local extrema of $u$. The extrema of $u|_{N}$ nearest the endpoints of $N$ must be local maxima since $u$ is positive and vanishes at the endpoints of $N$. Thus, $u$ has at most $\frac{\ell+1}{2}$ local extrema on $N$. Suppose that $u$ has $k$ critical points on $N$, so $\ell\leq k$. Let $n$ be as in the statement of the theorem. We will show that $k\leq n$. It will follow that $\ell\leq n$ when $n$ is odd, and $\ell\leq n-1$ when $n$ is even. Note that by the piecewise linearity assumption, $n$ is finite.\\
    \indent Let $X=\partial_x$ be the constant vector field tangent to $N$. By Lemma 6.6 of \cite{judgemondal}, each critical point of $u$ in $N$ is an endpoint of an arc in $\Zcal(Xu)$ that intersects $\Omega$, and by Lemma \ref{neumannloop}, the other endpoint of the arc emanating from each of these points cannot lie in $N$. By Lemmas \ref{noDircritpoints} and \ref{deg1Dirichlet}, the other endpoint of each of these arcs must be a point $(x,y)\in D$ such that either $x$ is a strict local extremum for $f$ or $x$ lies in an interval on which $f$ is constant. By Lemmas \ref{neumannloop} and \ref{deg1Dirichlet}, at most one of these arcs can terminate at each of these points. Suppose that $f'(x)\equiv 0$ on some open interval $I$, so $Xu\equiv 0$ on $I\times f(I)$. At an endpoint $x$ of $I$, Lemma \ref{deg1Dirichlet} shows that $I\times f(I)$ is the only arc in $\Zcal(Xu)$ terminating at $(x,f(x))$. Lemma \ref{neumannloop} shows that at most one arc in $\Zcal(Xu)$ with an endpoint in $N$ has an endpoint in $I\times f(I)$, so $n$ dominates the number of arcs of $\Zcal(Xu)$ with endpoints in $N$, and $k\leq n$.
\end{proof}

\section{Critical points on an edge}\label{critedge}
To prove Proposition \ref{obtuseNeumann} below, we will classify the critical points of $u$ by their so-called indices. Judge and Mondal used this classification in \cite{erratum} to prove the hot spots conjecture for acute triangles. Here we recall several of the facts from \cite{erratum} regarding the indices of critical points. Throughout this section, $P$ is a triangle, $D$ is the union of some collection of edges of $P$, $N$ is the union of the other edges, and $u$ is a first mixed eigenfunction for $(P,D,N)$. 
\begin{defn}
    Extend $u$ to a neighborhood of the Neumann edges of $P$ via reflection. Let $p\in\overline{P}$ be a critical point of $u$. Suppose that, near $p$, the point $p$ is a degree $n$ vertex of $u^{-1}(\{u(p)\})$. We then say that $p$ has \textit{index} $1-\frac{n}{2}$.
\end{defn}
For example, a local extremum of $u$ that is not a vertex of $P$ is an index $1$ critical point. 
\begin{lem}\label{indexpom1}
    The index of a critical point of $u$ is either $-1$, $0$, or $1$. 
\end{lem}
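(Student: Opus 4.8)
The plan is to control the local structure of the nodal set $u^{-1}(u(p))$ near a critical point $p$ using the vertex expansions from Section \ref{vertex}, together with the classical fact that near a critical point the level set of an eigenfunction looks like the zero set of a harmonic polynomial. First I would treat the case where $p$ is an interior critical point or a smooth boundary point of a Neumann edge (after reflecting $u$ across that edge, so $p$ becomes an interior point of a slightly larger domain on which $u$ still solves $-\Delta u = \lambda_1 u$). In a neighborhood of such a $p$, write $u(x,y) - u(p) = h_m(x,y) + o(|(x,y)|^m)$ where $h_m$ is a nonzero harmonic homogeneous polynomial of degree $m \geq 1$ (here $m \geq 2$ precisely when $p$ is a critical point). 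Harmonic homogeneous polynomials of degree $m$ in two variables are, up to rotation and scaling, $\mathrm{Re}\,(x+iy)^m = r^m \cos(m\theta)$, whose zero set is $m$ equally spaced lines through the origin, i.e. $2m$ rays. Hence $p$ is a degree $2m$ vertex of $u^{-1}(u(p))$, so its index is $1 - m \leq -1$; combined with the extremum case $m$ such that the level set is a single point (degree $0$, index $1$), and the generic case of a simple arc through $p$ (degree $2$, index $0$), this gives that the index lies in $\{1,0,-1,-2,\dots\}$. The content of the lemma is thus to rule out $m \geq 3$, equivalently index $\leq -2$, for first mixed eigenfunctions.

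The mechanism for ruling out $m \geq 3$ is the variational/unique-continuation package of Lemmas \ref{negativeintegral}, \ref{noloops}, and \ref{neumannloop}. If $p$ were an interior critical point of index $\leq -2$, then $u^{-1}(u(p))$ has at least $3$ arcs emanating from $p$, and locally these $2m \geq 6$ rays divide a small disk around $p$ into $2m$ sectors on which $u - u(p)$ alternates sign. One then wants to build from these local sectors a global open set $U$ with $\mathrm{int}(\Omega \setminus U) \neq \emptyset$ on which $\phi := \pm(u - u(p))$ (on a component where it is, say, positive, extended by the value it takes and cut off appropriately) — more precisely, work with $u$ restricted to a nodal domain of $u - u(p)$ — produces a test function contradicting Lemma \ref{negativeintegral} or \ref{neumannloop}. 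Since we are at a \emph{first} mixed eigenfunction, this is exactly the kind of argument those lemmas are designed for: a loop or an isolated high-order vertex in the level set of a $\lambda_1$-eigenfunction forces a subdomain supporting a $\lambda_1$-eigenfunction with Dirichlet or Neumann data, violating the strict monotonicity of eigenvalues under domain inclusion (for Dirichlet pieces) or unique continuation (for pieces vanishing on an open subset of the boundary).

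Concretely, I expect the cleanest route mirrors \cite{erratum}: a vertex of index $\leq -2$ of $u^{-1}(u(p))$ (interior, or on a Neumann edge after reflection, where the reflected structure forces an even number of arcs inside $\Omega$) means the graph $u^{-1}(u(p))$, which is a locally finite graph with degree-one vertices only on $\partial\Omega$, contains a cycle or a path between two Neumann-edge points not passing through any bad vertex; such a configuration bounds a region $\Omega_0 \subseteq \Omega$ with $\mathrm{int}(\Omega\setminus\Omega_0)\neq\emptyset$. On $\Omega_0$, the function $\phi = u - u(p)$ vanishes on the part of $\partial\Omega_0$ coming from the level set, satisfies $-\Delta\phi = \lambda_1\phi$, and — where $\partial\Omega_0$ meets $N$ — $\phi$ satisfies Neumann conditions (since $\partial_\nu u = 0$ there). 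If $\partial\Omega_0$ also meets $D$, then $\phi = u - u(p) = -u(p)$ there is constant but not zero, and one uses $u$ itself rather than $\phi$; in the remaining cases Lemma \ref{neumannloop} (when part of $\partial\Omega_0 \subseteq N$) or Lemma \ref{noloops} (pure Dirichlet-type boundary for $\phi$) yields $\lambda_1 > \lambda_1$, a contradiction. Finally, for $p$ a Neumann \emph{vertex} of $P$, one invokes Lemma \ref{deg1Neumann}, which bounds the degree of $v$ in $\Zcal(Xu)$ — and more to the point, the results of Section \ref{vertex} bound the order of vanishing of $u - u(p)$ there — to see directly that the index cannot drop below $-1$.

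The main obstacle I anticipate is the bookkeeping needed to produce the global region $\Omega_0$ from the purely local sign-alternation picture: one must argue, using that $u^{-1}(u(p))$ is a locally finite graph with the stated boundary behavior and that $P$ is simply connected, that at least two of the $2m$ arcs leaving $p$ can be joined (through the graph and along $N$, or into a loop) without hitting $D$ or another index-$\leq -2$ vertex, so that the subdomain one carves out genuinely has nonempty complement and the right boundary decomposition for Lemma \ref{negativeintegral}/\ref{neumannloop}. This is precisely the combinatorial heart of the Judge–Mondal–style argument in \cite{erratum}, and I would follow that template, substituting our mixed-boundary lemmas for their Neumann ones; everything else (the harmonic-polynomial normal form, the index computation, the vertex cases) is routine given Section \ref{vertex} and the variational lemmas above.
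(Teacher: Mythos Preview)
There is a genuine gap. Your central mechanism is to apply Lemmas \ref{noloops} and \ref{neumannloop} to $\phi = u - u(p)$ on a component of $\{u \neq u(p)\}$, but $\phi$ is \emph{not} an eigenfunction: since $-\Delta u = \lambda_1 u$, we have
\[
-\Delta\phi \;=\; \lambda_1 u \;=\; \lambda_1\phi + \lambda_1\,u(p),
\]
and $u(p)>0$ because $u$ does not vanish in $\overline P\setminus\overline D$. Thus $-\Delta\phi \neq \lambda_1\phi$, and neither Lemma \ref{noloops} nor Lemma \ref{neumannloop} applies to $\phi$. Using $u$ itself instead does not help either, since on the level-set portion of $\partial\Omega_0$ one has $u = u(p) \neq 0$, again outside the hypotheses of those lemmas. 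The variational package of Section \ref{killing} is tailored to genuine $\lambda_1$-eigenfunctions (such as $Xu$ for a Killing field $X$), not to translates of $u$ by a nonzero constant.

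The paper's proof is simply a reference to Proposition 2.5 of \cite{erratum}, and the one ingredient it singles out is exactly the positivity $u>0$ on $\overline P\setminus\overline D$. That positivity is used not through the Rayleigh quotient but through the maximum principle and the topology of the simply connected domain: since $u(p)>0$, the level set $u^{-1}(u(p))$ avoids $\overline D$, and any component of the sublevel set $\{u<u(p)\}$ must (by superharmonicity of $u$ after reflecting across $N$) reach $\overline D$; connectedness of $D$ then forces $\{u<u(p)\}$ to be a single component, and a planarity argument bounds the number of sectors at $p$ by four. If you want to reconstruct the argument rather than cite \cite{erratum}, this is the line to pursue; the eigenfunction lemmas of Section \ref{killing} are not the right tool here.
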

\begin{proof}
    This follows from the fact that $u$ does not vanish in $\overline{P}\setminus \overline{D}$. See Proposition 2.5 of \cite{erratum} for details. 
\end{proof}

\begin{lem}\label{notextremum}
    Let $e$ be a Neumann edge of $P$. Let $p\in e$ be a critical point of $u$ that is not a local extremum of $u|_{e}$. Then $p$ has index $0$. 
\end{lem}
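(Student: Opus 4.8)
The plan is to reduce to a local computation near $p$, using the reflection extension together with Lemma \ref{indexpom1}. Place $p$ at the origin with the Neumann edge $e$ on the $x$-axis and $P$ locally in $\{y>0\}$, and let $\overline{u}$ be the extension of $u$ across $e$ by reflection; then $\overline{u}$ is real-analytic on a ball $B$ about the origin, is even in $y$, and satisfies $-\Delta\overline{u}=\lambda_1\overline{u}$. By Lemma \ref{indexpom1} the index of $p$ is $-1$, $0$, or $1$, i.e.\ $p$ is a degree-$4$, degree-$2$, or degree-$0$ vertex of $\overline{u}^{-1}(\overline{u}(p))$; I will show directly that it is a degree-$2$ vertex, whence the index is $1-\tfrac{2}{2}=0$.

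First I would record the low-order data forced by the hypotheses. Since $p$ is a critical point, $\nabla\overline{u}(0)=0$; since $p\in N$ is a smooth point that is not an endpoint of $\overline{D}$, we have $\overline{u}(0)=u(p)>0$, so $\Delta\overline{u}(0)=-\lambda_1 u(p)\neq 0$. Set $h(x):=\overline{u}(x,0)-\overline{u}(0)$, a real-analytic function of one variable with $h(0)=h'(0)=0$. If $h\equiv 0$ then $u|_e$ is constant near $p$, making $p$ a local extremum of $u|_e$, contrary to hypothesis; so $h(x)=cx^k+O(x^{k+1})$ with $c\neq 0$ and $k\geq 2$. Because $p$ is not a local extremum of $u|_e$, $h$ must change sign at $0$, which forces $k$ to be odd, hence $k\geq 3$. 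In particular $h''(0)=\partial_x^2\overline{u}(0)=0$, so $\partial_y^2\overline{u}(0)=\Delta\overline{u}(0)=-\lambda_1 u(p)\neq 0$.

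Next I would describe the level set. Since $\overline{u}$ is even in $y$, write $\overline{u}(x,y)-\overline{u}(0)=h(x)+y^2 g(x,y)$ with $g$ real-analytic and $g(0,0)=\tfrac12\partial_y^2\overline{u}(0)\neq 0$. Replacing $(h,g)$ by $(-h,-g)$ if necessary (which does not change the set $\{h+y^2g=0\}$) and then replacing $x$ by $-x$ if necessary, I may assume $g>0$ on a smaller ball, $h(x)<0$ for $0<x<\delta$, and $h(x)>0$ for $-\delta<x<0$. Then $\overline{u}^{-1}(\overline{u}(0))$ near the origin is $\{y^2 g(x,y)=-h(x)\}$: this is empty for $x<0$, equals $\{(0,0)\}$ for $x=0$, and for each fixed $x\in(0,\delta)$ the map $y\mapsto y^2 g(x,y)$ is strictly increasing on a small interval $[0,\eta)$ (since $\partial_y(y^2g)=y(2g+y\partial_yg)>0$ there) and attains the value $-h(x)>0$ at a unique $y=\phi(x)$, with $\phi$ smooth on $(0,\delta')$ by the implicit function theorem and $\phi(x)\to 0$ as $x\to 0^+$; thus there are exactly the two solutions $y=\pm\phi(x)$. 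Hence near $p$ the level set is the union of $\{p\}$ with the two arcs $\{(x,\pm\phi(x)):0<x<\delta'\}$, so $p$ is a degree-$2$ vertex of $\overline{u}^{-1}(\overline{u}(p))$ and therefore has index $0$.

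I expect the only delicate point is the level-set description in the last step: the two branches $y=\pm\phi(x)$ meet $e$ tangentially at $p$ (a cusp, since $k\geq 3$), so one cannot invoke a transversality statement and must argue the monotonicity of $y\mapsto y^2 g(x,y)$ and the exhaustion of the local level set by these two branches by hand, as above. The one-variable bookkeeping in the second paragraph — that ``$p$ is not a local extremum of $u|_e$'' forces both $k$ odd and $\partial_y^2\overline{u}(p)\neq 0$ — is routine but is exactly what makes $g(0,0)\neq 0$, which the whole argument rests on.
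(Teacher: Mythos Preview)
Your proof is correct, but it takes a different route from the paper's. The paper argues by elimination using Lemma~\ref{indexpom1}: since $p$ is not a local extremum of $u|_e$, it is not a local extremum of $u$, so the index is not $1$; then, since $u$ is non-constant on $e$ (by Theorem~\ref{finitecritset}) the level set meets $e$ only at $p$, and a short symmetry argument about the reflection across $e$ rules out index $-1$ (with four arcs the two sectors containing $e$ would carry the same sign of $u-u(p)$, forcing $p$ to be a local extremum of $u|_e$); hence the index is $0$.

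Your argument instead computes the local structure directly. The key observation you exploit---that the non-extremal hypothesis forces the vanishing order of $u|_e-u(p)$ to be odd (hence $\partial_x^2\overline u(p)=0$), and then the eigenvalue equation with $u(p)>0$ forces $\partial_y^2\overline u(p)\neq 0$---lets you write $\overline u-\overline u(0)=h(x)+y^2g(x,y)$ with $g(0,0)\neq 0$ and read off the level set explicitly as two reflected arcs tangent to $e$. This is more hands-on and more informative (it exhibits the cusp), and it does not rely on the sector-and-sign reasoning that the paper invokes only in passing. The paper's argument is shorter and leans on the structural picture already set up in Lemma~\ref{indexpom1} and the reference cited there; yours is more self-contained at the cost of a longer local computation.
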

\begin{proof}
    Since $p$ is not a local extremum of $u|_{e}$, it is certainly not an extremum of $u$ and thus does not have index $1$. Since $u$ is non-constant on $e$ (see Theorem \ref{finitecritset}), there is a neighborhood $U$ of $p$ such that $U\cap u^{-1}(u(p))\cap e=\{p\}$. Since $u$ is not a local extremum of $u|_{e}$, it follows that $u$ does not have index $-1$ using the symmetry of the level set about $e$. By Lemma \ref{indexpom1}, the index of $p$ is therefore equal to $0$.
\end{proof}

\begin{prop}\label{indexzero}
    Let $X$ be a constant vector field. Let $e$ be an edge of $P$. If $p\in e$ is a critical point of $u$ that is not a local extremum of $u|_{e}$, then $p$ is not a degree one vertex of $\Zcal(Xu)$.
\end{prop}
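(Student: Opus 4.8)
The plan is to combine the index classification of the preceding two lemmas with the Taylor/Bessel expansions at a vertex from Section \ref{vertex}. The key observation is that the degree of $p$ as a vertex of $\Zcal(Xu)$ is directly tied to the index of $p$ as a critical point of $u$. Specifically, I would first treat the case where $p$ is an interior point of the edge $e$, and then separately handle the case where $p$ is a vertex of $P$ (though since $p$ is a critical point of $u$ and not a local extremum of $u|_e$, and since the relevant vertices on $N$ are Neumann vertices by Lemma \ref{noDircritpoints}, one expects $p$ to be a smooth Neumann boundary point in the generic situation; still, the vertex case should be addressed).

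For the interior case, extend $u$ by reflection across $e$ to a neighborhood of $p$, so that $u$ becomes a Laplace eigenfunction on a full disk around $p$ with $u$ even in the coordinate normal to $e$. By Lemma \ref{notextremum}, $p$ has index $0$, meaning $p$ is a degree $2$ vertex of the level set $u^{-1}(u(p))$; equivalently, writing the Taylor expansion of $u-u(p)$ at $p$, the lowest-order homogeneous harmonic part $h_k$ has degree $k=2$. So near $p$, in coordinates $(s,t)$ with $s$ tangent and $t$ normal to $e$, we have $u(s,t) - u(p) = Q(s,t) + O(|(s,t)|^3)$ where $Q$ is a nonzero harmonic quadratic form, even in $t$; the only such forms are scalar multiples of $s^2 - t^2$. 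Now if $X$ is a constant vector field with angle $\delta$ and directional coordinate, then $Xu$ has leading term $X Q$, which is a nonzero \emph{linear} form unless $X$ is tangent to $e$ — and $X$ tangent to $e$ is exactly the case $X = \partial_s$, where $X Q = 2s$ still vanishes only on $\{s=0\}$, a single line through $p$. In every case $Xu$ has a nonvanishing linear leading term at $p$, so $\Zcal(Xu)$ is locally a single smooth arc through $p$: that is, $p$ is a degree \emph{two} vertex of $\Zcal(Xu)$ in the graph-theoretic sense (two edges emanating), hence not a degree one vertex. I would phrase the conclusion carefully to match the paper's convention for "degree one vertex."

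For the vertex case, one uses the appropriate expansion from Section \ref{vertex} together with Lemmas \ref{deg1Neumann}, \ref{deg1Dirichlet}, or \ref{deg1Mixed}: these show $v$ is \emph{at most} a degree one vertex of $\Zcal(Xu)$, and the claim is that when $p=v$ is a critical point of $u$ that is not a local extremum of $u|_e$, it cannot in fact be a degree one vertex. Here the leading coefficient of $Xu$ at $v$ must be computed and shown to be nonzero precisely when $p$ fails to be a local extremum of $u|_e$: one compares the order of vanishing of $u$ along $e$ (which controls whether $p$ is a local extremum of $u|_e$) against the order of vanishing of $Xu$ at $v$ in the direction into $\Omega$. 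The main obstacle will be this vertex bookkeeping — reconciling the exponents $n\nu$ (or $(n+\tfrac12)\nu$) in the separation-of-variables expansions with the condition "$p$ is not a local extremum of $u|_e$," and confirming that the only way $p$ could be a degree one vertex of $\Zcal(Xu)$ is if a lower-order term in the expansion vanishes, which would force $p$ to be an extremum of $u|_e$ after all. I expect this to reduce, as in the proof of Lemma \ref{deg1Dirichlet}, to checking that a certain trigonometric expression has no zero in the angular range $[0,\beta]$, contradicting the non-extremum hypothesis.
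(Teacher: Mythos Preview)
Your interior-edge argument contains a genuine error in the characterization of index-$0$ critical points. You claim that index $0$ means the leading homogeneous part of $u-u(p)$ at $p$ is a nonzero \emph{harmonic} quadratic $Q$, necessarily a multiple of $s^2-t^2$. Both assertions fail. First, since $-\Delta u=\lambda_1 u$ and $u(p)>0$, the quadratic part $Q$ satisfies $\Delta Q=-\lambda_1 u(p)\neq 0$, so $Q$ is not harmonic. Writing $Q(s,t)=\alpha s^2+\gamma t^2$ (even in $t$ by reflection), one has $\alpha+\gamma=-\lambda_1 u(p)/2<0$. Second, if $Q$ were a nonzero multiple of $s^2-t^2$ (or more generally had eigenvalues of opposite sign), the level set $u^{-1}(u(p))$ would locally be two transversal curves through $p$, i.e.\ a degree-$4$ vertex, giving index $-1$, not $0$. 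In fact, index $0$ forces the degenerate case $\alpha=0$, $\gamma<0$: then $u|_e-u(p)$ vanishes to order at least $3$ at $p$, the level set has a cusp tangent to $e$, and the structure of $\Zcal(Xu)$ at $p$ is governed by the cubic (or higher) terms, not by $Q$ alone. Your computation of $XQ$ as a nonvanishing linear form therefore does not control the local degree of $\Zcal(Xu)$ in $\Omega$; one must instead analyze how the zero set of $2\gamma b\,t+3ac_3(s^2-t^2)+\cdots$ meets the half-plane $t>0$, which is exactly the delicate point.

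The paper does not carry out this analysis directly: it simply observes that Lemma~\ref{notextremum} gives index $0$ and then invokes the contrapositive of Proposition~2.7 of \cite{erratum}, which encapsulates the required local study. Your proposed vertex case is also unnecessary: by the conventions set in Section~\ref{intro}, non-smooth boundary points and endpoints of $\overline D$ are never elements of $\overline{\crit}(u)$, so a critical point $p\in e$ is automatically an interior point of a Neumann edge.
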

\begin{proof}
    Since $p$ is not a local extremum of $u|_{e}$, Lemma \ref{notextremum} implies that $p$ has index $0$. The statement is then the contrapositive of Proposition 2.7 of \cite{erratum}.
\end{proof}

\begin{remk}\label{atleasttwo}
    Proposition \ref{indexzero} does not rule out $p$ being a vertex of degree greater than one of $\Zcal(Xu)$. In fact, if $X$ is parallel to the edge $e$, then Lemma 6.6 \cite{judgemondal} shows that $p$ is a degree at least two vertex of $\Zcal(Xu)$.
\end{remk}

\section{First mixed eigenfunctions on triangles: proof of Theorem \ref{mainthm}}\label{triangletheorem}

We now prove three propositions that constitute Theorem \ref{mainthm}.
\begin{prop}\label{obtusetwosided}\label{acutetwosided}
    Let $P$ be a triangle and $N$ be an edge of $P$. Let $D$ be the union of the other two edges. Then each first mixed eigenfunction $u$ for $(P,D,N)$ has exactly one critical point. This critical point lies in $N$ and is the global maximum of $u$. Moreover, if $L$ is a constant vector field that bisects the angle of the Dirichlet vertex, then $Lu>0$ in $P$. 
\end{prop}
\begin{proof}
    By Lemma \ref{deg1Dirichlet}, the Dirichlet vertex is not an endpoint of any arc in $\Zcal(Lu)$. By Lemma \ref{noDircritpoints}, no arc of $\Zcal(Lu)$ has an endpoint in the interior of one of the Dirichlet edges. Hence, if $\Zcal(Lu)$ intersects $P$, then by Lemma \ref{noloops}, some arc in $\Zcal(Lu)$ has two distinct endpoints in $N$, and these endpoints are critical points of $u$. If $L$ is orthogonal to the Neumann edge, then $Lu$ vanishes on $N$, and Lemma \ref{noloops} shows that $Lu$ does not vanish in $P$. Let $X$ be the constant vector field parallel to $N$. Then the Dirichlet vertex is a degree one vertex of $\Zcal(Xu)$, and $Xu$ does not vanish in the interior of either Dirichlet edge by Lemma \ref{noDircritpoints}. Each critical point on $N$ is an endpoint of an arc in $\Zcal(Xu)$ that intersects $P$. These arcs cannot intersect by Lemma \ref{neumannloop}, and at most one of them can terminate at the Dirichlet vertex. It follows that there is at most one critical point on $N$, so $Lu\neq 0$ in $P$. Since $u\geq 0$, it follows that $Lu>0$ in $P$, and $u$ has exactly one critical point on $N$. 
\end{proof}

The next result proves the case in Theorem \ref{mainthm} where $D$ is one edge of $P$ and where the Neumann vertex is non-obtuse.
\begin{prop}\label{acuteNeumann}
    Suppose that $P$ is a triangle and that $D$ is an edge of $P$. Let $N=\partial P\setminus \overline{D}$. If the Neumann vertex $v$ of $P$ has angle $\leq\frac{\pi}{2}$, then $\overline{\crit}(u)$ is empty. In particular, $v$ is the unique local (and hence the global) maximum of $u$. Moreover, if $L$ denotes the constant vector field that restricts to the inward normal vector field to $D$, then $Lu>0$ in $P$.
\end{prop}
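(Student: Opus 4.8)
The plan is to reduce, in each of the cases $\beta = \pi/2$ and $\beta < \pi/2$ (here $\beta$ is the angle of $P$ at the Neumann vertex $v$, the two Neumann edges are $e_1, e_2$, meeting at $v$, and $A = e_1\cap D$, $B = e_2\cap D$ are the mixed vertices, of angles $\alpha_1, \alpha_2$, so $\alpha_1+\alpha_2+\beta=\pi$), to the two assertions ``$Lu > 0$ in $P$'' and ``$u$ has no critical point on the open edges $e_1, e_2$''. Granting these, $\overline{\crit}(u) = \emptyset$ by Lemma \ref{noDircritpoints}; moreover $v$ is a strict local maximum of $u$ (immediate from the vertex expansion at $v$ when $\beta < \pi/2$, since $\tfrac{\pi}{\beta} > 2$ forces $u(re^{i\theta}) - u(v) = -\tfrac14 a_0\lambda r^2 + o(r^2) < 0$; and from $\partial_{\nu_1}u, \partial_{\nu_2}u > 0$ when $\beta = \pi/2$, as below), while $Lu > 0$ makes $u$ strictly increasing along each ray in direction $L$ until it meets $\partial P$; such a ray started in the interior cannot reach $D$ (on which $u = 0$), so it meets $\overline{e_1}\cup\overline{e_2}$, where $u$ is strictly monotone on each edge from $0$ up to $u(v)$ — hence $v$ is the unique global maximum.

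For $\beta = \pi/2$ I would avoid $\Zcal(Lu)$ entirely. Reflecting $P$ across $e_1$ straightens the right angle at $v$, so the double $\Omega_1$ is an isosceles triangle whose base $e_2\cup e_2'$ is a single Neumann segment and whose two equal sides are Dirichlet; thus $\Omega_1$ is a graph domain (a ``tent''), and the reflected eigenfunction, being positive, is its first mixed eigenfunction, so Theorem \ref{Gpairs} gives $\partial_{\nu_2}u > 0$ in $P$, where $\nu_2$ is the outward normal to $e_2$. Reflecting across $e_2$ gives $\partial_{\nu_1}u > 0$ in $P$. Since $|e_1|\nu_1 + |e_2|\nu_2 + |D|\nu_D = 0$ for any triangle, the inward normal $L = -\nu_D = \tfrac{|e_1|}{|D|}\nu_1 + \tfrac{|e_2|}{|D|}\nu_2$ is a positive combination, whence $Lu > 0$ in $P$. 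Finally Theorem \ref{bound} with $n = 1$ for a tent, together with the reflection symmetry of $\Omega_1$, forces the unique critical point of the doubled eigenfunction on its Neumann base to be $v$, which is a vertex of $P$; comparing with the reflection across $e_2$ shows $u$ has no critical point on $e_1$ or $e_2$.

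For $\beta < \pi/2$ I would work with $\Zcal(Lu)$, in the manner of Proposition \ref{obtusetwosided}. The Hopf lemma applied to the subharmonic function $-u$ gives $Lu > 0$ on the open edge $D$. None of $A, B, v$ is a vertex of $\Zcal(Lu)$: at the mixed vertices $L$ has angle $\tfrac{\pi}{2}$ (the inward normal to $D$), which Lemma \ref{deg1Mixed} excludes; at $v$, with $e_1$ on the positive $x$-axis, $L$ has angle $\tfrac{3\pi}{2} - \alpha_1$, and since $\alpha_1 < \pi - \beta$ this lies in $(\tfrac{\pi}{2}+\beta, \tfrac{3\pi}{2})$, disjoint from the vertex angles $[\tfrac{\pi}{2}, \tfrac{\pi}{2}+\beta]\cup[\tfrac{3\pi}{2}, \tfrac{3\pi}{2}+\beta]$ allowed by the first case of Lemma \ref{deg1Neumann} (applicable precisely because $\beta < \pi/2$) — this is the only place non-obtuseness of $v$ is used. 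The vertex expansions also give $Lu = (L\cdot\tau_i)\,\partial_{\tau_i}u > 0$ near both endpoints of each $e_i$ (with $\tau_i$ pointing from the mixed vertex toward $v$, both factors are positive). Hence $\Zcal(Lu)\cap\overline P$ is a compact graph avoiding $D$ and neighborhoods of $A, B, v$, with no loop by Lemma \ref{noloops}; if nonempty it is a nontrivial forest whose leaves lie on the open Neumann edges and are critical points of $u$, so there are at least two of them. Conversely, a critical point of $u$ on an open Neumann edge is a sign-changing zero of $Lu$ along that edge (when it is a nondegenerate critical point of $u|_{e_i}$), or a degree-$\ge 2$ vertex of $\Zcal(Lu)$ (by Proposition \ref{indexzero}, in the degenerate index-$0$ case), so in either case it forces $\Zcal(Lu)$ to meet $P$.

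It therefore suffices to show $u$ has at most one critical point on $e_1\cup e_2$: with the dichotomy just obtained this gives none, so $\Zcal(Lu)\cap P = \emptyset$, $Lu$ has constant sign in $P$ — positive by its behavior near $D$ — and the proof finishes as in the first paragraph. Proving this bound is the main obstacle. I would use the tangential fields $X_i \parallel e_i$: $X_i u$ obeys Neumann conditions on $e_i$, does not vanish on the open edge $D$ nor near $v$ on $e_1\cup e_2$, and $v$ is not a vertex of $\Zcal(X_i u)$ (Lemma \ref{deg1Neumann}). By Lemma \ref{neumannloop} no arc of $\Zcal(X_i u)$ joins two points of the open edge $e_i$, and by Proposition \ref{indexzero} and the index bookkeeping of Section \ref{critedge} each critical point of $u$ on $e_i$ emits an arc of $\Zcal(X_i u)$ into $P$ whose other endpoint is a critical point on the opposite Neumann edge or the mixed vertex there; since each mixed vertex is at most a degree-one vertex of the relevant $\Zcal(X_i u)$ (Lemma \ref{deg1Mixed}) and these arcs cannot cross (Lemma \ref{neumannloop}, applied to the regions they cut off), a planarity count across the two edges bounds the total number of such critical points by one. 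The genuinely delicate step is this arc-counting on the two Neumann edges; the remaining ingredients are routine consequences of the earlier sections.
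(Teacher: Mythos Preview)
Your treatment of the right-angle case $\beta=\pi/2$ is correct and is in fact a pleasant variant of the paper's argument: the paper reflects once across a Neumann edge, applies Theorem~\ref{Gpairs} to the resulting tent, and then runs a separate nodal-set argument for $Lu$; you reflect across \emph{each} Neumann edge to obtain $\partial_{\nu_1}u>0$ and $\partial_{\nu_2}u>0$ directly, and then observe that $L=-\nu_D$ is a positive combination of $\nu_1$ and $\nu_2$.  That shortcut is genuine.

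For $\beta<\pi/2$ your reduction via $\Zcal(Lu)$ is also sound: you correctly verify (using Lemmas~\ref{deg1Neumann} and~\ref{deg1Mixed}) that none of the three vertices is a vertex of $\Zcal(Lu)$, and hence that any arc of $\Zcal(Lu)$ meeting $P$ must have two distinct leaves on the open Neumann edges, each a critical point of $u$.  So it really does suffice to show there is at most one critical point on $e_1\cup e_2$.

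The gap is in your final paragraph.  The ``planarity count'' with the tangential fields $X_i$ does \emph{not} bound the total number of Neumann-edge critical points by one.  In the present geometry ($\beta<\pi/2$) Lemma~\ref{deg1Mixed} actually shows that the mixed vertex opposite $e_i$ is \emph{not} a vertex of $\Zcal(X_iu)$ (contrary to what you allow), so every arc of $\Zcal(X_iu)$ emanating from a critical point on $e_i$ must land at a critical point in the interior of the other Neumann edge.  This, together with Lemma~\ref{neumannloop}, yields only $n_1\le n_2$ and $n_2\le n_1$, i.e.\ $n_1=n_2$; it places no obstruction whatsoever on $n_1=n_2=1$ (or $2,3,\dots$), since $n_1$ pairwise disjoint arcs running from $e_1$ to $e_2$ are perfectly planar.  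Nothing in Section~\ref{critedge} changes this.

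What the paper uses to close exactly this gap is the integral identity of Lemma~\ref{intFormula} together with the variational inequality of Lemma~\ref{negativeintegral}.  Assuming $n_1=n_2>0$, let $p_{e'}$ and $p_{e''}$ be the critical points on $e'$ and $e''$ nearest to $v$; an arc of $\Zcal(L_{e''}u)$ joins them, and one takes $\Omega$ to be the component of its complement containing $v$.  Since $\beta<\pi/2$, Lemma~\ref{localmax} gives $u(p_{e'})<u(v)$, and Lemma~\ref{intFormula} then yields
\[
\int_{\partial\Omega\cap\partial P} L_{e''}u\,\partial_\nu L_{e''}u \;<\; 0,
\]
contradicting Lemma~\ref{negativeintegral}.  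This step---an energy inequality, not a combinatorial count---is the missing ingredient in your proposal; without it (or a substitute of comparable strength) the argument for $\beta<\pi/2$ does not conclude.
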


In the case where the Neumann vertex has a right angle, we will see that this result is a corollary of Theorem \ref{Gpairs}. We begin with two preparatory lemmata. 

\begin{lem}\label{localmax}
    Let $P$ be a polygon and $D$ some collection of edges of $P$. Each Neumann vertex with angle less than $\frac{\pi}{2}$ is a local maximum of each non-negative first mixed eigenfunction $u$ of $(P,D,N)$. 
\end{lem}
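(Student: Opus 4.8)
The statement to prove is Lemma \ref{localmax}: a Neumann vertex $v$ of angle $\beta < \pi/2$ is a local maximum of any non-negative first mixed eigenfunction $u$. The plan is to use the local expansion \eqref{neumannexpansion} of $u$ near $v$. Placing $v$ at the origin with the sector $S_\epsilon = \{re^{i\theta} : 0<\theta<\beta,\ 0<r<\epsilon\}$ and $\nu = \pi/\beta > 2$, the expansion reads
\[
u(re^{i\theta}) = a_0 g_0(r^2) + a_1 r^{\nu} g_{\nu}(r^2)\cos(\nu\theta) + \sum_{n\geq 2} a_n r^{n\nu} g_{n\nu}(r^2)\cos(n\nu\theta).
\]
Since $u$ is the first mixed eigenfunction it is positive in $\Omega$ and vanishes only on $D$, so in particular $u(v) = a_0 g_0(0) = a_0 J_0(0) \cdot(\text{const}) \neq 0$; after normalizing $u>0$ we get $a_0 > 0$.

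\textbf{Key steps.} First I would expand $u(re^{i\theta}) - u(v)$ to leading order: since $\nu > 2$, the term $a_0(g_0(r^2) - g_0(0)) = a_0 g_0'(0) r^2 + o(r^2)$ dominates all the $n\geq 1$ terms, which are $O(r^{\nu}) = o(r^2)$. Because $r^{2}g_0(r^2)$-type expansion comes from $J_0(\sqrt\lambda r) = 1 - \frac{\lambda r^2}{4} + \cdots$, we have $g_0'(0) < 0$ (equivalently $r^{\nu}g_{n\nu}(r^2) = J_{n\nu}(\sqrt\lambda r)$ and the relevant Bessel function is decreasing near $0$ for the ground state — this sign is exactly what makes $-\Delta u = \lambda u$ with $u>0$ force concavity at the max of the radial part). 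Hence
\[
u(re^{i\theta}) - u(v) = a_0 g_0'(0)\, r^2 + o(r^2) < 0
\]
for all sufficiently small $r>0$, uniformly in $\theta \in [0,\beta]$, which is precisely the statement that $v$ is a strict local maximum of $u$ on $\overline\Omega$ near $v$. I would cite the properties of $g_{n\nu}$ recorded just after \eqref{neumannexpansion} (entire, $g_{n\nu}^{(k)}(0)\neq 0$) together with the explicit identification $r^{n\nu}g_{n\nu}(r^2) = J_{n\nu}(\sqrt\lambda r)$ to pin down the sign of $g_0'(0)$; alternatively, one can argue the sign abstractly: if $g_0'(0) \geq 0$ then averaging $u$ over the arc $\{r = \text{const}\}$ would be non-decreasing in $r$ near $0$, contradicting that the mean value of an eigenfunction with $-\Delta u = \lambda u > 0$ is strictly subharmonic-in-the-wrong-direction near an interior-type point (this is the standard Bessel monotonicity for $J_0$).

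\textbf{Main obstacle.} The only real subtlety is making rigorous that the sector expansion \eqref{neumannexpansion} converges and can be differentiated/truncated with a genuine $o(r^2)$ remainder uniformly in $\theta$ — i.e. that the $n\geq 1$ tail is truly $O(r^{\nu})$ with $\nu > 2$. This is standard (it is the content of the cited references \cite{judgemondal} and the computations in Section \ref{vertex}), so I would simply invoke it; the rest is the elementary observation that a positive function whose leading-order deviation from its vertex value is $a_0 g_0'(0) r^2$ with $g_0'(0)<0$ has a local max there. I do not expect to need the full strength of Lemmas \ref{deg1Neumann}--\ref{deg1rotational}; this lemma is purely local and is the "base case" feeding into the proof of Proposition \ref{acuteNeumann}.
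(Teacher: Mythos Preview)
Your proposal is correct and follows exactly the approach the paper indicates: use expansion \eqref{neumannexpansion} together with $u>0$ (hence $a_0>0$), note that $\nu=\pi/\beta>2$ forces the $n\geq 1$ terms to be $o(r^2)$, and read off the sign from $g_0'(0)=-\lambda/4<0$. The paper's own proof simply states that the result follows from \eqref{neumannexpansion} and positivity of $u$, deferring the details to Proposition 2.1 of \cite{erratum}; you have supplied precisely those details.
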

\begin{proof}
    This follows from the expansion (\ref{neumannexpansion}) and the fact that $u>0$ in $\overline{P}\setminus \overline{D}$. See Proposition 2.1 of \cite{erratum} for details.  
\end{proof}

\begin{lem}\label{interiorimpliesexterior}
    Let $P$ be a triangle, and let $D$ be an edge of $P$. Let $N=\partial P\setminus \overline{D}$. If the first mixed eigenfunction $u$ for $(P,D,N)$ has an interior critical point, then $u$ has at least one critical point on each Neumann edge.
\end{lem}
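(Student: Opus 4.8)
The plan is to argue by contradiction using the zero-level set $\Zcal(Xu)$ for a well-chosen constant vector field $X$, together with the topological/variational lemmas of Section \ref{killing}. Suppose $u$ has an interior critical point $p\in P$, but that some Neumann edge $e$ of $P$ carries no critical point of $u$. Label the two Neumann edges $e_1,e_2$; they meet at the Neumann vertex $v$, and the Dirichlet edge $D$ connects the two Dirichlet vertices $w_1,w_2$ (with $w_i$ an endpoint of $e_i$). The idea is to pick $X$ parallel to the edge $e$ that is assumed to contain no critical point — say $e=e_1$ — so that $Xu$ satisfies Neumann conditions on $e_1$. Since $p$ is an interior critical point, $p\in\Zcal(Xu)$, and because $|\nabla u|^2$ (hence $Xu$, up to the usual local structure) has the locally finite graph structure recalled in Section \ref{killing}, an arc $\gamma$ of $\Zcal(Xu)$ emanates from $p$; I then trace $\gamma$ and show it cannot close up anywhere.

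First I would dispose of the places $\gamma$ cannot end. By Lemma \ref{noDircritpoints}, $\Zcal(Xu)$ (equivalently $\overline{\crit}(u)$ where $Xu$ can vanish on $D$... more precisely: $Xu$ restricted to the interior of $D$) — here one uses that $u$ vanishes on $D$ but its tangential derivative along $D$ does not, since otherwise $u$ would vanish to second order on a boundary arc, contradicting positivity as in Lemma \ref{noDircritpoints}; so no arc of $\Zcal(Xu)$ has an endpoint in the interior of $D$. By Lemma \ref{deg1Dirichlet} applied at $w_1$ and $w_2$, and Lemma \ref{deg1Neumann} applied at $v$, I control precisely which of the three vertices can be a degree-one vertex of $\Zcal(Xu)$ for this particular choice of $\delta$ (the angle of $X$, equal to the direction of $e_1$): the point is that with $X$ parallel to $e_1$, at most one vertex — and in fact I want to arrange that essentially none, or at most the far Dirichlet vertex $w_2$ — can absorb an arc. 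Since by assumption $e_1$ contains no critical point of $u$, Lemma 6.6 of \cite{judgemondal} forbids $\Zcal(Xu)\cap e_1$ from containing interior points as arc-endpoints there as well, and Lemma \ref{neumannloop} forbids an arc of $\Zcal(Xu)$ from having both endpoints on $e_1$ (no non-convex vertices or endpoints of $\overline D$ are available in a triangle). That leaves $e_2$: an arc of $\Zcal(Xu)$ ending on $e_2$ would end at a critical point of $u$ on $e_2$.

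So the arc $\gamma$ from $p$ must terminate either at a vertex (controlled above) or at a critical point on $e_2$. If it can do neither — which is what I expect the vertex analysis via Lemmas \ref{deg1Neumann} and \ref{deg1Dirichlet} to give for the specific angle $\delta$ of $X$ — then by Lemma \ref{noloops} (no loops, since a triangle has no non-convex vertices and $\overline D$ has its endpoints at the Dirichlet vertices, already handled) we reach a contradiction, forcing a critical point on $e_2$. Running the same argument with the roles of $e_1$ and $e_2$ reversed — i.e. if instead $e_2$ were the critical-point-free edge — yields a critical point on $e_1$. Hence an interior critical point forces a critical point on *each* Neumann edge, as claimed.

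The main obstacle is the vertex bookkeeping: one must check that for $X$ chosen parallel to the designated Neumann edge, the angle-interval conditions in Lemmas \ref{deg1Neumann} and \ref{deg1Dirichlet} genuinely exclude an arc of $\Zcal(Xu)$ from being absorbed at each of the three vertices (or leave exactly one admissible vertex, in which case one still needs a second, independent arc from $p$ — note $p$ being a critical point of $u$ has even degree $\ge 2$ in $\Zcal(\nabla u)$, but here I am working with a single scalar derivative $Xu$, so I may instead need to invoke that $p$ lies on $\Zcal(Xu)$ for *two* independent constant fields $X, X'$, or use the index classification from Section \ref{critedge}). Pinning down which of these two routes is cleanest — a single-field argument with tight vertex exclusions, versus a two-field or index argument — is the crux, and I expect the single-field version to work precisely because $\delta$ is pinned to an edge direction, which sits at the boundary of the bad intervals in Lemmas \ref{deg1Neumann}--\ref{deg1Dirichlet}.
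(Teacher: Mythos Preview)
Your argument has a structural gap in its logic. From ``interior critical point and no critical point on $e_1$ $\Rightarrow$ critical point on $e_2$'' together with its mirror image, you can only conclude that an interior critical point forces a critical point on \emph{at least one} Neumann edge, not on both. The lemma requires the unconditional implication ``interior critical point $\Rightarrow$ critical point on $e_2$'', with no hypothesis on $e_1$ whatsoever. Your choice $X\parallel e_1$ cannot deliver this: if $e_1$ does carry critical points, the arc of $\Zcal(Xu)$ through $p$ may simply terminate there and never reach $e_2$. (There is also a secondary failure: when the Neumann vertex has angle $\pi/2$, your $X$ is perpendicular to $e_2$, so $Xu\equiv 0$ on all of $e_2$ by the Neumann condition, and a landing point there tells you nothing.)

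The paper repairs exactly this by replacing the constant field with the \emph{rotational} field $R_{w_1}$ centered at the mixed vertex $w_1\in \overline{e_1}\cap\overline D$. Along $e_1$ the field $R_{w_1}$ is normal to the edge, so the Neumann condition gives $R_{w_1}u\equiv 0$ on \emph{all} of $e_1$; thus an interior arc of $\Zcal(R_{w_1}u)$ through $p$ cannot terminate on $e_1$ in any useful way (two endpoints there would bound a Dirichlet subdomain, violating Lemma~\ref{noloops}). Lemma~\ref{noDircritpoints} excludes the interior of $D$ and Lemma~\ref{deg1rotational} excludes the far mixed vertex $w_2$, so the arc is forced onto $e_2$. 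Finally, $R_{w_1}$ at any $q\in e_2$ is orthogonal to the ray $w_1q$ and hence is never normal to $e_2$ (since $w_1$ does not lie on the line through $e_2$); combined with the Neumann condition on $e_2$, vanishing of $R_{w_1}u$ at $q$ forces $\nabla u(q)=0$. This produces a critical point on $e_2$ with no assumption on $e_1$; the symmetric argument with $R_{w_2}$ handles $e_1$.
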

\begin{proof}
    Let $v$ denote a mixed vertex of $P$ and $R_v$ the rotational vector field centered at $v$. Then $R_vu$ vanishes on the Neumann edge adjacent to $v$, and $R_vu$ cannot vanish in the interior of $D$ by Lemma \ref{noDircritpoints}. By Lemma \ref{deg1rotational}, $\Zcal(R_vu)$ does not have a vertex at the opposite endpoint of $D$. If $u$ has an interior critical point, then $\Zcal(R_vu)$ has an arc that intersects $P$, and by Lemma \ref{noloops}, this arc must have distinct endpoints in $\partial P$. By Lemma \ref{noloops}, these endpoints cannot both be in the Neumann edge adjacent to $v$. Thus, $\Zcal(R_vu)$ must have an endpoint in the interior of the edge $e$ opposite to $v$. Since $R_vu$ is nowhere orthogonal to the edge opposite to $v$ and this edge is a Neumann edge, this endpoint is a critical point of $u$. A symmetric argument yields a critical point on the other Neumann edge. 
\end{proof}

\begin{proof}[Proof of Proposition \ref{acuteNeumann}]
    Let $e'$ and $e''$ be the Neumann edges of $P$, and let $v$ be the Neumann vertex. We first prove the theorem in the case that the angle at $v$ is strictly less than $\frac{\pi}{2}$. Let $L_{e'}$ and $L_{e''}$ be the constant vector fields tangent to $e'$ and $e''$, respectively. Let $n'$ (resp. $n''$) denote the number of critical points on $e'$ (resp. $e''$). By Theorem \ref{finitecritset}, $n'$ and $n''$ are finite. By Lemma \ref{interiorimpliesexterior}, to show that $u$ has no interior critical points, it suffices to show that $n'=n''=0$. Suppose toward a contradiction that there exists a critical point $p$ on $e'$, so $n'>0$. Then $p$ is an endpoint of an arc in $\Zcal(L_{e'}u)$ that, by Lemma \ref{neumannloop}, must have another endpoint in $e''$ that is not $v$. Let $v'$ be the (mixed) vertex opposite to $e'$. By Lemma \ref{deg1Mixed}, this vertex is not a vertex of $\Zcal(L_{e'}u)$, so the other endpoint to this arc must be in the interior of $e''$, and this endpoint is a critical point of $u$. Arcs of $\Zcal(L_{e'}u)$ emanating from distinct points in $e'$ cannot intersect each other by Lemma \ref{neumannloop}. Thus, $n'\leq n''$. By a symmetric argument, $n''\leq n'$, so $n'=n''$. Let $p_{e'}$ be the nearest critical point on $e'$ to $v$, and let $p_{e''}$ be the nearest critical point on $e''$ to $v$. By the argument above, $\Zcal(L_{e''}u)$ contains an arc joining $p_{e'}$ to $p_{e''}$. Let $\Omega$ be the region bounded by this arc whose closure contains $v$ (see Figure \ref{acuteomega}). By Lemma \ref{localmax}, $v$ is a local maximum, so $u(p_{e'})<u(v)$. Lemma \ref{intFormula} gives
    $$\int_{\partial\Omega\cap\partial P}L_{e''}u\partial_{\nu}L_{e''}u<0$$ since the angle at $v$ is less than $\frac{\pi}{2}$, contradicting Lemma \ref{negativeintegral}.\\
        \begin{figure}
        \centering
        \includegraphics[width=5cm]{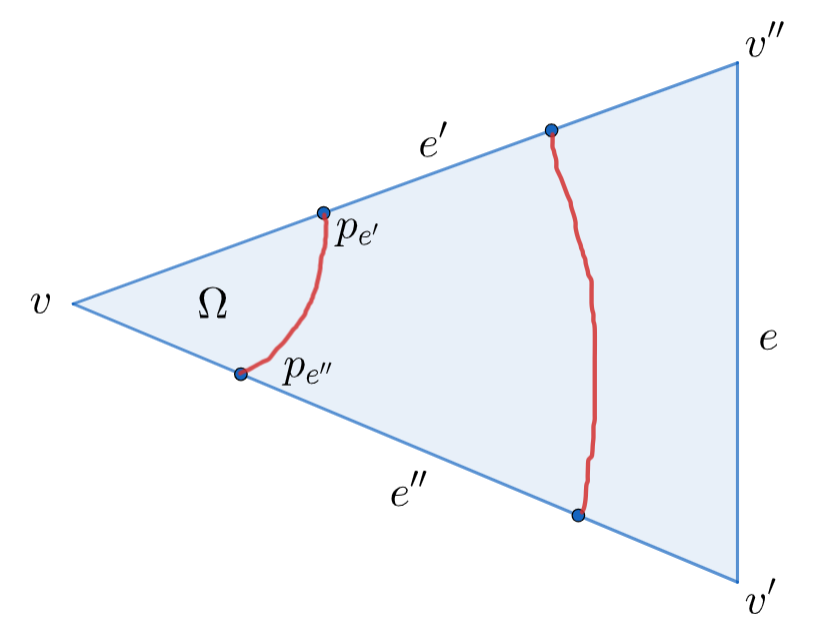}
        \caption{Illustration of the situation in the proof of Theorem \ref{acuteNeumann}. The red arcs are $\Zcal(L_{e''}u)$.}
        \label{acuteomega}
    \end{figure}
    \indent Now suppose that the angle at $v$ is $\frac{\pi}{2}$. By reflecting $P$ over one of its Neumann edges, we obtain another triangle $P'$, and we can extend $u$ to a function $u'$ on $P'$ via reflection. Since $u'>0$ in $P'$ and $u'\equiv0$ on two edges $e_1$ and $e_2$ of $P'$, $u'$ is the first mixed eigenfunction of $(P',e_1\cup e_2,\partial P'\setminus\overline{e_1\cup e_2})$. Since $P$ is a right triangle, $P'$ is a graph domain, and we may apply Theorem \ref{Gpairs} to see that $u'$ has a unique local extremum on the Neumann edge of $P'$. Since $u'$ is even about the line of symmetry of $P'$, this extremum must be at the Neumann vertex of $P$.\\
    \indent For either of the above cases, let $V$ be the constant vector field that restricts to the inward unit normal on $D$. Then $Vu$ cannot vanish in $D$ and cannot have a degree one vertex in either of the Neumann edges by the above. By Lemma \ref{deg1Mixed}, the endpoints of $D$ are also not vertices of $\Zcal(Vu)$. At most one arc of $\Zcal(Vu)$ can have an endpoint at the Neumann vertex. Since this arc cannot form a loop by Lemma \ref{noloops}, it therefore cannot exist, so $Vu$ does not vanish in $P$. Since $u$ was chosen to be positive, it follows that $Vu>0$ in $P$.
\end{proof}

The last case in Theorem \ref{mainthm} is 
\begin{prop}\label{obtuseNeumann}
    Suppose that $P$ is a triangle and that $D$ is an edge of $P$ such that the Neumann vertex $v$ of $P$ has angle $>\frac{\pi}{2}$. If $P$ is isosceles, then $\overline{\crit}(u)=\emptyset$, and $v$ is the unique local (and hence global) maximum of $u$. If $P$ is not isoceles, then $\overline{\crit}(u)$ consists of a single point that is contained in the longer Neumann edge. This critical point is the unique local (and hence global) maximum of $u$. Moreover, in either case, if $L$ is the constant vector field extending the outward normal vector field to the longer Neumann edge, then $Lu> 0$ in $P$.
\end{prop}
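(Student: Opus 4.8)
The plan is to treat the isosceles case by reduction to Proposition~\ref{acuteNeumann} and the non‑isosceles case by the zero‑set method of Proposition~\ref{acuteNeumann}, with the integral identity of Lemma~\ref{intFormula} as the decisive tool. Write $v$ for the obtuse Neumann vertex, $e'$ and $e''$ for the two Neumann edges with $|e'|\ge|e''|$, and $A=e'\cap\overline D$, $B=e''\cap\overline D$ for the two mixed vertices; since the angle at $v$ exceeds $\pi/2$, the angles at $A$ and $B$ are acute, and the angle‑sum identity forces $|e'|=|e''|$ precisely when these two angles agree, i.e. precisely when $P$ is isosceles. In that case $P$, and hence $u$ (by simplicity of $\lambda_1$), is invariant under the reflection in the perpendicular bisector $\ell$ of $D$.

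First I would dispose of the isosceles case. Let $P_+$ be the half of $P$ cut off by $\ell$: it is a triangle whose edges are $e'$, the half of $D$ lying in $\overline{P_+}$, and the segment $\ell$. By the symmetry, $u|_{P_+}$ satisfies Neumann conditions on $\ell$ and $e'$ and the Dirichlet condition on the half‑edge of $D$, and it is positive, so it is a first mixed eigenfunction of the corresponding triple; its Neumann vertex is $v$, whose angle is half the original (obtuse) angle there, hence less than $\pi/2$. Proposition~\ref{acuteNeumann} then gives that $u|_{P_+}$ has no critical points and that $v$ is its global maximum. Reflecting, $u$ has no critical points in the interior, on $e'\cup e''$, on $\ell$ (a critical point there would be a critical point of $u|_{P_+}$ on the Neumann edge $\ell$), or on $D$ (Lemma~\ref{noDircritpoints}); so $\overline{\crit}(u)=\emptyset$ and $v$ is the global maximum.

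For the non‑isosceles case ($|e'|>|e''|$), by Lemma~\ref{noDircritpoints} there are no critical points on $D$, and by Lemma~\ref{interiorimpliesexterior} an interior critical point forces one on each Neumann edge, so it suffices to locate $\overline{\crit}(u)\cap(e'\cup e'')$. I would run the zero‑set analysis of Proposition~\ref{acuteNeumann}: a critical point on a Neumann edge $e$ spawns an arc of $\Zcal(X_eu)$ into $P$ with $X_e$ tangent to $e$ (Lemma~6.6 of \cite{judgemondal}), distinct such arcs are disjoint (Lemma~\ref{neumannloop}), and Lemmas~\ref{noDircritpoints}, \ref{deg1Mixed}, \ref{deg1Neumann} together with the index classification of Section~\ref{critedge} (Lemma~\ref{notextremum}, Proposition~\ref{indexzero}, Remark~\ref{atleasttwo}) restrict where these arcs can terminate; the obtuseness enters through the vertex expansion \eqref{neumannexpansion}, in which $\nu=\pi/\beta_v\in(1,2)$, and through the coefficient $a_1$, which one shows is non‑zero when $P$ is not isosceles. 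Feeding any configuration with two or more critical points on $e'\cup e''$ into Lemma~\ref{intFormula} over a region cut off near $v$, and comparing against Lemma~\ref{negativeintegral}, should yield a contradiction, and the inequality $|e'|>|e''|$ should force the one surviving critical point onto $e'$.

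\textbf{The hard part will be this last comparison.} In the acute case Proposition~\ref{acuteNeumann} fixes the sign in Lemma~\ref{intFormula} using that $v$ is a local maximum (Lemma~\ref{localmax}); when $v$ is obtuse and $a_1\neq 0$ the expansion \eqref{neumannexpansion} instead makes $v$ a saddle of the extended $u$, so one must argue that the global maximum of $u$ — which exists and cannot occur on $D$ or at a mixed vertex (where $u=0$) nor at $v$ — is attained at a critical point on a Neumann edge, and then use the geometry to place it on the longer edge $e'$ and to preclude a second critical point; this is where $|e'|>|e''|$ must be exploited, presumably by comparing $u(v)$ with the values of $u$ at the candidate critical points on $e'$ and $e''$ through repeated use of Lemma~\ref{intFormula}. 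Once it is known that $\crit(u)=\emptyset$ and $\overline{\crit}(u)\cap e''=\emptyset$ (in both the isosceles and the non‑isosceles cases), the assertion $Lu>0$ follows as in Propositions~\ref{obtusetwosided} and~\ref{acuteNeumann}: $e'\subseteq\Zcal(Lu)$, while $\Zcal(Lu)$ meets neither $D$ (Lemma~\ref{noDircritpoints}, as $L$ is not tangent to $D$) nor $e''$ (no critical points there) nor a mixed vertex along an interior arc (Lemma~\ref{deg1Mixed}), and contains no loop (Lemma~\ref{noloops}); an interior arc of $\Zcal(Lu)$ would then have both endpoints on $e'$ and bound a region on whose entire boundary $Lu$ vanishes, contradicting Lemma~\ref{noloops}, so $\Zcal(Lu)\cap P=\emptyset$. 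Hence $Lu$ has a constant sign in $P$, which is positive by a divergence‑theorem computation for $\int_P Lu$ (the obtuseness of $v$ being used to sign the boundary term along $e''$); and the unique point of $\overline{\crit}(u)$ on $e'$ — or $v$ in the isosceles case — is then the global maximum, being the only remaining candidate.
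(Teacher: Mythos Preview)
Your isosceles reduction is exactly the paper's, and your overall architecture for the non-isosceles case (zero-set analysis along $e'$ and $e''$, index counting, Lemma~\ref{intFormula} against Lemma~\ref{negativeintegral}, then the $Lu>0$ argument) matches the paper's as well. But you are missing the one genuinely new idea that makes the non-isosceles case go through, and you flag it yourself with phrases like ``which one shows is non-zero'' and ``presumably by comparing.''

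The gap is this: you never say how to prove $a_1\neq 0$ when $P$ is not isosceles, and you never say how to force the surviving critical point onto the \emph{longer} edge $e'$. Lemma~\ref{intFormula} alone will not do either job: it compares $u$-values at two points on the \emph{same} edge, so it cannot by itself distinguish $e'$ from $e''$, and it says nothing about $a_1$. The paper handles both points with the same construction, which your proposal does not contain. One reflects $P$ across the angle bisector at $v$, takes the kite $K$ where $P$ and its reflection overlap, and on $K$ studies the antisymmetrized function $w(x,y)=u(x,y)-u(x,-y)$ (coordinates chosen so the bisector is the $x$-axis). This $w$ is a $\lambda_1$-eigenfunction on $K$, vanishes on the bisector, satisfies Neumann conditions on the two edges through $v$, and has forced signs on the remaining two edges of $K$ (from $u>0$ inside, $u=0$ on $D$). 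If $a_1=0$, the vertex expansion of $w$ at $v$ starts at order $\ge 3\nu$, so at least three nodal arcs of $w$ leave $v$; combined with Lemmas~\ref{noloops} and~\ref{neumannloop} and the sign information on $\partial K$, this is impossible unless $w\equiv 0$, which forces $P$ to be isosceles. The same kite argument, run after the counting has produced a single critical point, rules out that point sitting on the shorter edge $e''$: if it did, $w$ would change sign along $e''$, producing a nodal arc with nowhere legal to land.

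Once you have $a_1\neq 0$, your counting sketch is on the right track but still underspecified: the paper uses Remark~\ref{atleasttwo} and Proposition~\ref{indexzero} to get the inequalities $2s'+t'-1\le t''$ and $2s''+t''-1\le t'$ (with $s$ counting non-extremal and $t$ extremal boundary critical points), then uses Lemma~\ref{intFormula} only to exclude the \emph{nearest-to-$v$} local maximum on the opposite edge as a landing point, and finally a parity argument on the number of extrema of $u|_{e'\cup e''}$ to force $|n'-n''|=1$ and then $n''=0$. Your ``repeated use of Lemma~\ref{intFormula}'' is too vague to reproduce this. Finally, for the sign of $Lu$ you do not need a divergence computation: once $\Zcal(Lu)\cap P=\emptyset$, the Hopf-type observation that $u>0$ in $P$ and $u=0$ on $D$ fixes the sign of $Lu$ near $D$ (the obtuseness at $v$ and acuteness at $B$ make $L$ have a positive inward component along $D$), and that sign propagates.
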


\begin{proof}
     Let $v$ be the Neumann vertex of $P$. If $P$ is isosceles, then $u$ is even about the line segment bisecting the angle at $v$. Let $P'$ be one of the two triangles into which this angle bisector divides $P$. The restriction of $u$ to $P'$ is a first mixed eigenfunction, so $u$ having no critical points follows from Proposition \ref{acuteNeumann}.\\
     \indent Suppose that $P$ is not isosceles, and suppose toward a contradiction that $v$ is a local maximum of $u$. Then $a_1=0$ in expansion (\ref{neumannexpansion}). Let $e'$ and $e''$ be the Neumann edges of $P$, and let $e$ be the Dirichlet edge. Let $v'$ and $v''$ be the vertices of $P$ opposite to $e'$ and $e''$, respectively. Suppose without loss of generality that $e''$ is strictly shorter than $e'$. Let $v_1$ be the endpoint in $e$ of the line segment bisecting the angle at $v$. Let $v_2\in e'$ be the image of $v'$ under the reflection about this line segment. Suppose that $P$ is embedded in $\Rbb^2$ such that this angle bisector lies in the $x$-axis. On the kite $K$ with vertices $v$, $v'$, $v_1$, and $v_2$, define a function $$w(x,y)=u(x,y)-u(x,-y).$$ We will show that $w\equiv0$, which will imply that $u$ vanishes on the line segment joining $v_1$ to $v_2$, contradicting that $P$ is not isosceles and that $u>0$ in $P$.\\
    \indent Suppose to the contrary that $w$ is not identically $0$. Since $u>0$ in $P$, we have that $w>0$ on the interior of the line segment joining $v_1$ to $v_2$ and that $w<0$ on the interior of the line segment joining $v'$ to $v_1$. By applying a Euclidean isometry, suppose now that $v$ lies at the origin with one of its adjacent edges contained in the positive $x$-axis. Expansion (\ref{neumannexpansion}) then yields $$w(re^{i\theta})=\sum_{n\geq 3:\;n\;\text{odd}}2a_nr^{n\nu}g_{n\nu}(r^2)\cos(n\nu\theta),$$ where we used that $a_1=0$. By this expansion, at least $3$ arcs in $\Zcal(w)$ emanate from $v$. One of these arcs coincides with the angle bisector at $v$ since $w$ is odd about this line segment. Since $-\Delta w=\lambda_1 w$, Lemma \ref{noloops} shows that these arcs cannot intersect each other or themselves anywhere except at $v$. Since $w\neq 0$ on the interiors of the two edges adjacent to $v_1$, two of these arcs must have an endpoint either in the edge joining $v$ to $v'$ or the edge joining $v$ to $v_2$. However, since $w$ satisfies Neumann boundary conditions on these edges, this contradicts Lemma \ref{neumannloop}. We therefore have that $v$ is not a local maximum of $u$.\\
    \indent By Lemma \ref{interiorimpliesexterior}, to show that $u$ has no interior critical points, it suffices to show that there are not critical points on both Neumann edges. Since $v$ is not an extremum of $u$, $a_1\neq0$ in the expansion  (\ref{neumannexpansion}) for $v$. Let $n'$ (resp. $n''$) be the number of critical points on $e'$ (resp. $e''$). Let $s'$ (resp. $s''$) denote the number of critical points on $e'$ (resp. $e''$) that are not local extrema of $u|_{e'}$ (resp. $u|_{e''}$). Let $t'=n'-s'$ and $t''=n''-s''$. By Lemma \ref{deg1Mixed}, $v'$ is a degree one vertex of $\Zcal(L_{e'}u)$, and $v''$ is a degree one vertex of $\Zcal(L_{e''}u)$. Since $a_1\neq 0$, $v$ is not a vertex of $\Zcal(L_{e'}u)$ or $\Zcal(L_{e''}u)$. By the same arguments used in the proof of Proposition \ref{acuteNeumann}, each critical point on $e'$ is a vertex of an arc in $\Zcal(L_{e'}u)$ that has a degree one vertex in $e''$ or at $v'$. By Lemma \ref{indexzero}, each degree one vertex of $\Zcal(L_{e'}u)$ (resp. $\Zcal(L_{e''}u)$) on $e''$ (resp. $e'$) is a local extremum of $u|_{e''}$ (resp. $u|_{e'}$). By Remark \ref{atleasttwo}, we find that $2s'+t'-1\leq t''$. Similarly, $2s''+t''-1\leq t'$. It follows that $s'+s''\leq 1$ and $|n'-n''|\leq 1$.\\
    \indent Let $p$ be the nearest local maximum of $u|_{e''}$ to $v$. If $u|_{e''}$ has a local minimum between $p$ and $v$, let $q$ denote this local minimum. We claim that if $q$ exists, then no two arcs in $\Zcal(L_{e'}u)$ with endpoints in $e'$ can have endpoints at both $p$ and $q$. We also claim that if $q$ does not exist, then no arc in $\Zcal(L_{e'}u)$ with an endpoint in $e'$ has an endpoint at $p$. We prove the second claim, and the first claim is proved similarly. Indeed, suppose that $p$ is joined by an arc in $\Zcal(L_{e'}u)$ to $e'$. Then $u(v)<u(p)$ since $u|_{e''}$ has no local minimum between $p$ and $v$. Let $\Omega$ be the region bounded by this arc whose closure contains $v$ (see Figure \ref{obtusenodal}). By Lemma \ref{intFormula} 
    $$\int_{\partial\Omega\cap\partial P}L_{e'}u\partial_{\nu}L_{e'}u<0,$$ contradicting Lemma \ref{negativeintegral}. \\
    \begin{figure}
        \centering
        \includegraphics[width=8cm]{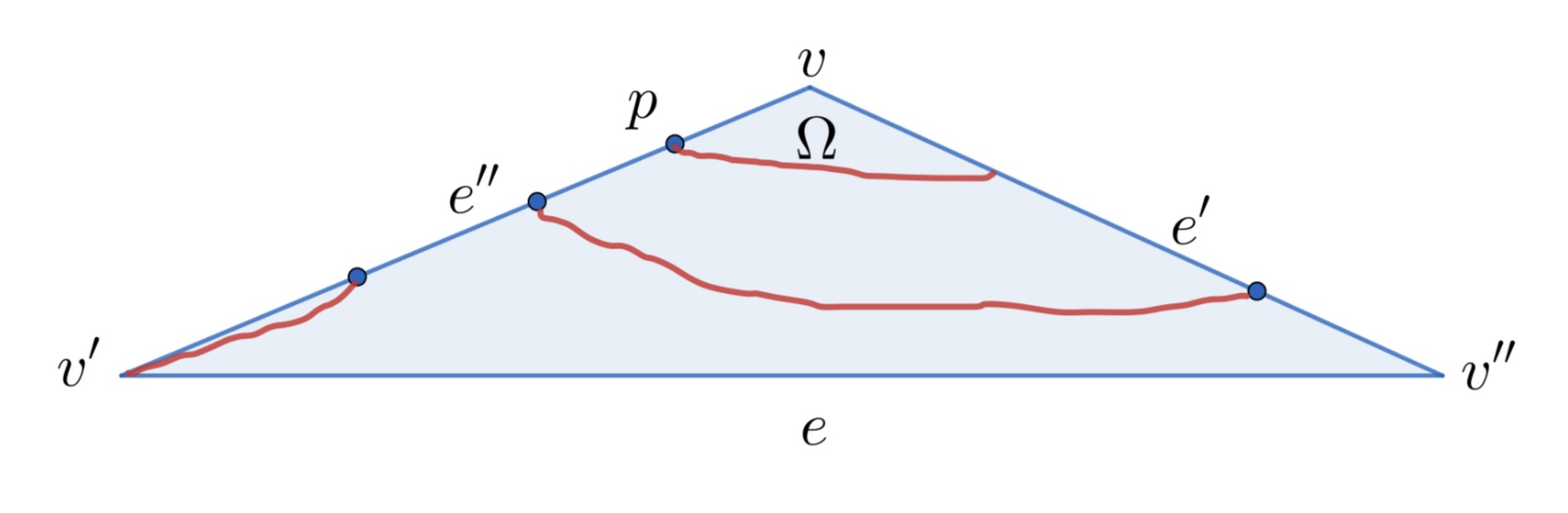}
        \caption{Illustration of the region $\Omega$ constructed in the proof of Proposition \ref{obtuseNeumann}. Here $p$ is the nearest local maximum of $u|_{e''}$ to $v$, and the red arcs represent $\Zcal(L_{e'}u)$.}
        \label{obtusenodal}
    \end{figure}
    \indent We claim that $|n'-n''|=1$. If not then $n'=n''$. Since $u(v')=u(v'')=0$ and $u\geq 0$, the restriction $u|_{e'\cup e''}$ has an odd number of local extrema. Since $n'+n''$ is even, it follows that exactly one of the critical points on the boundary is not a local extremum of $u|_{e'\cup e''}$. Suppose without loss of generality that this critical point lies in $e'$. It follows from Remark \ref{atleasttwo} that $\Zcal(L_{e'}u)$ contains at least $n'+1$ arcs with endpoints in $e'$. Every critical point on $e''$ is then a degree one vertex of one of these arcs. By the previous paragraph, $u|_{e''}$ does not have any local maxima. If $u|_{e''}$ has a local minimum, then there exists a local maximum between this point and $v'$. Thus, $u|_{e''}$ has no local extrema. By the assumption that the only non-extremal critical point lies in $e'$, it follows that $n''=0$. Since $n'=n''=0$, there are no critical points on $e'\cup e''$, contradicting the extreme value theorem. \\
    \indent Therefore, $|n'-n''|=1$. Suppose without loss of generality that $n'=n''+1$. Since $n'+n''$ is odd and there is an odd total number of local extrema of $u|_{e'\cup e''}$, it follows that every critical point on $e'\cup e''$ is a local extremum of $u|_{e'\cup e''}$. Since $n'>n''$, every critical point on $e''$ is a vertex of an arc in $\Zcal(L_{e'}u)$ whose other endpoint lies in $e'$. Since none of these points can be the local maximum of $u|_{e''}$ nearest to $v$, it follows that $u|_{e'}$ has no local maxima and therefore no local minima. Hence, $n''=0$, and $n'=1$, so there are also no interior critical points. Since $u$ must have some global maximum, the critical point $p\in e'$ is the global maximum of $u$. \\
    \indent Now suppose that the unique critical point of $u$ lies in the shorter Neumann edge $e''$. Define the kite $K$ and function $w$ as above. We again have that $w$ does not vanish on the interiors of the edges of $K$ adjacent to $v_1$. Since the critical point of $u$ on $e''$ is the unique local maximum, we have that $w>0$ in a neighborhood of the critical point of $u$. Since $u\geq 0$ and $u(v')=0$, we have that $w<0$ in a neighborhood of $v'$. Thus, there exists an arc in $\Zcal(w)$ with an endpoint in $e''$. As above, $w\equiv 0$ in the line segment bisecting the angle at $v$. The arc with an endpoint in $e''$ cannot intersect this line segment by Lemma \ref{neumannloop}. Since $w\neq 0$ on the interiors of the edges adjacent to $v_1$, the other endpoint of this arc cannot be in one of these edges. Its other endpoint also cannot be in $e''$ by Lemma \ref{neumannloop}, a contradiction.\\
    \indent Whether or not $P$ is isosceles, let $L$ be the constant vector field extending the outward normal vector field to the longer Neumann edge $e'$. Then $Lu|_{e'}\equiv 0$, and $Lu$ cannot vanish in the interiors of the other two edges. By Lemma \ref{deg1Mixed}, the mixed vertex $v'$ opposite to $e'$ is not a vertex of $\Zcal(Lu)$. Thus, if $Lu$ vanishes in $P$, then $\Zcal(Lu)$ must contain a loop, contradicting Lemma \ref{noloops}. Since $u\geq 0$ and $Lu\neq 0$ in the interior of $P$, it follows that $Lu>0$ in $P$. 
    \end{proof}

\begin{proof}[Proof of Theorem \ref{mainthm}]
    If $D$ comprises two edges of $P$, then the result holds by Proposition \ref{acutetwosided}. If $D$ equals a single edge of $P$, then Propositions \ref{acuteNeumann} and \ref{obtuseNeumann} combine to give the result. 
\end{proof}




\begin{thebibliography}{99}

\bibitem[Ag65]{agmon} S. Agmon. \textit{Lectures on elliptic boundary value problems}. Vol. 369. American Mathematical Soc., 1965.

\bibitem[BB99]{banuelosburdzy} R. Ba\~nuelos and K. Burdzy, \textit{On the “Hot Spots” Conjecture of J. Rauch}. Journal of Functional Analysis, 164, 1-33 (1999).

\bibitem[BW99]{burdzywerner} K. Burdzy and W. Werner (1999). \textit{A counterexample to the `hot spots' conjecture}. Annals of Mathematics, 149, 309-317.

\bibitem[BN91]{nirenberg} H. Berestycki and L. Nirenberg (1991). \textit{On the method of moving planes and the sliding method}. Boletim da Sociedade Brasileira de Matemática-Bulletin/Brazilian Mathematical Society 22.1: 1-37.

\bibitem[CC98]{CC} X. Cabr\'{e} and S. Chanillo (1998). \textit{Stable solutions of semilinear elliptic problems in convex domains}. Sel. Math., New Ser., 4, 1. 

\bibitem[CGY23]{yao1} H. Chen, C. Gui, and R. Yao (2023). \textit{Uniqueness of critical points of the second Neumann eigenfunctions on triangles}. arXiv preprint. arXiv:2311.12659v1.

\bibitem[G85]{grisvard} P. Grisvard (1985). \textit{Elliptic problems in nonsmooth domains}. Pitman Publishing Inc.

\bibitem[H52]{hopf} E. Hopf. \textit{A remark on linear elliptic differential equations of second order}. Proc. Amer. Math. Soc. 3 (1952), 791–793.

\bibitem[JM20]{judgemondal} C. Judge and S. Mondal (2020). \textit{Euclidean triangles have no hot spots}. Annals of Mathematics, 191, 167-211.  

\bibitem[JM20err]{erratum} C. Judge and S. Mondal (2022). \textit{Erratum: Euclidean triangles have no hot spots}. Annals of Mathematics, 195, 337-362. 

\bibitem[JM22a]{polygons} C. Judge and S. Mondal (2022). \textit{Critical points of Laplace eigenfunctions on polygons}. Communications in Partial Differential Equations, 47(8), 1559-1590.

\bibitem[JM22b]{remarksoncritset} C. Judge and S. Mondal (2022). \textit{Some remarks on critical sets of Laplace eigenfunctions}. arXiv preprint. arXiv:2204.11968.

\bibitem[M09]{miyamoto} Y. Miyamoto (2009). \textit{The hot spots conjecture for a certain class of planar convex domains}. Journal of Mathematical Physics, 50(10).

\bibitem[Polymath]{polymath} Polymath project 7, Thread 5 \textit{Hot spots conjecture}. August 9, 2013. https://polymathprojects.org/2013/08/09/polymath7-research-thread-5-the-hot-spots-conjecture/.

\bibitem[R74]{rauch} J. Rauch (1975). \textit{Five problems: an introduction to the qualitative theory of partial differential equations}. Partial differential equations and related topics (Program, Tulane University, New Orleans, LA, 1974), 355-369. Lecture Notes in Mathematics, 446, Springer, Berlin. 

\bibitem[RT75]{rauchtaylor} J. Rauch and M. Taylor. \textit{Potential and scattering theory on wildly perturbed domains}. Journal of Functional Analysis, 18.1, 27-59.

\bibitem[S15]{siudeja} B. Siudeja (2015). \textit{Hot spots conjecture for a class of acute triangles}. Math. Z., 280, 783-806. 

\end{thebibliography}
\end{document}